\newcommand{\R}{\textbf{R}}
\newcommand{\C}{\textbf{C}}
\newcommand{\N}{\textbf{N}}
\newcommand{\subres}{\textrm{S}}
\newcommand{\imu}{\bm{\mathit{i}}}
\newcommand{\rank}{\textrm{\upshape rank}}
\newcommand{\diag}{\textrm{diag}}
\newtheorem{thm}{Theorem}
\newtheorem{prop}{Proposition}
\newtheorem{lem}{Lemma}
\theoremstyle{definition}
\newtheorem{defn}{Defnition}
\newtheorem{alg}{Algorithm}
\newtheorem{exmp}{Example}
\theoremstyle{remark}
\newtheorem{rem}{Remark}
\begin{document}

\begin{frontmatter}

\title{GPGCD: An iterative method for calculating approximate GCD of
  univariate polynomials\tnoteref{label1}}

\tnotetext[label1]{Preliminary versions of this paper have been
  presented at ISSAC'09 (Seoul, Republic of Korea, July 28--31, 2009)
  \cite{ter2009} and The Joint Conference of ASCM-MACIS 2009
  (Fukuoka, Japan, December 14--17, 2009) \cite{ter2009b}.}

\author{Akira Terui}
\address{Faculty of Pure and Applied Sciences\\
  University of Tsukuba\\
  Tsukuba, 305-8571 Japan}
\ead{terui@math.tsukuba.ac.jp}
\ead[url]{http://researchmap.jp/aterui}

\begin{abstract}
  We present an iterative algorithm for calculating approximate
  greatest common divisor (GCD) of univariate polynomials with the
  real or the complex coefficients.  For a given pair of polynomials
  and a degree, our algorithm finds a pair of polynomials which has a
  GCD of the given degree and whose coefficients are perturbed from
  those in the original inputs, making the perturbations as small as
  possible, along with the GCD.  The problem of approximate GCD is
  transfered to a constrained minimization problem, then solved with
  the so-called modified Newton method, which is a generalization of
  the gradient-projection method, by searching the solution
  iteratively.  We demonstrate that, in some test cases, our algorithm
  calculates approximate GCD with perturbations as small as those
  calculated by a method based on the structured total least norm
  (STLN) method and the UVGCD method, while our method runs
  significantly faster than theirs by approximately up to $30$ or $10$
  times, respectively, compared with their implementation.  We also
  show that our algorithm properly handles some ill-conditioned
  polynomials which have a GCD with small or large leading
  coefficient.
\end{abstract}

\begin{keyword}
Approximate polynomial GCD, Gradient-projection method,
Ill-conditioned problem, Optimization.
\end{keyword}

\end{frontmatter}

\section{Introduction}

For algebraic computations on polynomials and matrices, approximate
algebraic algorithms are attracting more attention than before.  These
algorithms take inputs with some ``noise'' such as polynomials with
floating-point number coefficients with rounding errors, or more
practical errors such as measurement errors, then, with minimal
changes on the inputs, seek a meaningful answer that reflect desired
property of the input, such as a common factor of a given degree.  By
this characteristic, approximate algebraic algorithms are expected to
be applicable to more wide range of problems, especially those to
which exact algebraic algorithms were not applicable.

As an approximate algebraic algorithm, we consider calculating the
approximate greatest common divisor (GCD) of univariate polynomials
with the real or the complex coefficients, such that, for a given pair
of polynomials and a degree $d$, finding a pair of polynomials which
has a GCD of degree $d$ and whose coefficients are perturbations from
those in the original inputs, while making the perturbations as small
as possible, along with the GCD.  This problem has been extensively
studied with various approaches including the Euclidean method on the
polynomial remainder sequence (PRS) (\cite{bec-lab1998b},
\cite{sas-nod89}, \cite{sch1985}), the singular value decomposition
(SVD) of the Sylvester matrix (\cite{cor-gia-tra-wat1995},
\cite{emi-gal-lom1997}), the LU or QR factorization of the Sylvester
and/or B\'ezout matrix or their displacements (\cite{bin-boi2010},
\cite{boi2007}, \cite{cor-wat-zhi2004}, \cite{zar-ma-fai2000},
\cite{zhi2003})%
\footnote{Note that the article by \citet{bin-boi2010} has absence of
  reference to a literature on computation on structured matrices by
  \citet{pan1990}, whereas the dissertation by \citet{boi2007} has no
  such omission.}, Pad\'e approximation (\cite{pan2001b}),
optimization strategies (\cite{che-gal-mou-yak2011},
\cite{chi-cor-cor1998}, \cite{kal-yan-zhi2006},
\cite{kal-yan-zhi2007}, \cite{kar-lak1998}, \cite{zen2011}).
Furthermore, stable methods for ill-conditioned problems have been
discussed (\cite{bin-boi2010}, \cite{cor-wat-zhi2004},
\cite{ohs-sug-tor1997}, \cite{san-sas2007}).

Among methods in the above, we focus our attention on optimization
strategy in this paper, especially iterative method for approaching an
optimal solution, after transferring the approximate GCD problem into
a constrained minimization problem.  Already proposed algorithms
utilize iterative methods including the Levenberg-Marquardt method
(\cite{chi-cor-cor1998}), the Gauss-Newton method (\cite{zen2011}) and
the structured total least norm (STLN) method (\cite{kal-yan-zhi2006},
\cite{kal-yan-zhi2007}).  Among them, STLN-based methods have shown
good performance calculating approximate GCD with sufficiently small
perturbations efficiently.

Here, we utilize the so-called modified Newton method
(\cite{tan1980}), which is a generalization of the gradient-projection
method (\cite{ros1961}), for solving the constrained minimization
problem.  This method has interesting features such that it combines
the \textit{projection} and the \textit{restoration} steps in the
original gradient-projection method, which reduces the number of
solving a linear system.  We demonstrate that our algorithm calculates
approximate GCD with perturbations as small as those calculated by the
STLN-based methods, while our method show significantly better
performance over them in its speed compared with their implementation,
by approximately up to $30$ times.  Furthermore, we also show that our
algorithm can properly handle some ill-conditioned problems such as
those with GCD containing small or large leading coefficient. We call
our algorithm \textit{GPGCD} after the initials of the
\textit{gradient projection} method. 

In this paper, we present the following expansion from the previous
results (\cite{ter2009}, \cite{ter2009b}) as presenting the new
algorithm for monic polynomials to calculate perturbed polynomials
without giving perturbations for the leading coefficients; providing
experiment results for new test polynomials that have been prepared
more carefully and comparison with the UVGCD method (\cite{zen2008})
(in Section~\ref{sec:test-appgcd}); adding more experiments for
comparison of our algorithm with the STLN-based method and the UVGCD
method (in Section~\ref{sec:test-zeng}).

The rest part of this chapter is organized as follows.  In
Section~\ref{sec:formulation}, we transform the approximate GCD
problem into a constrained minimization problem.  In
Section~\ref{sec:gp}, we review the framework of the
gradient-projection method and the modified Newton method.  In
Section~\ref{sec:gpgcd}, we show an algorithm for calculating the
approximate GCD, and discuss issues in the application of the
gradient-projection method or the modified Newton method.  In
Section~\ref{sec:exp}, we demonstrate performance of our algorithm
with experiments.

\section{Formulation of the Approximate GCD Problem}
\label{sec:formulation}

Let $F(x)$ and $G(x)$ be univariate polynomials with the real or the
complex coefficients, given as
\begin{equation}
  \label{eq:FG}
  \begin{split}
    F(x)
    &
    = f_m x^m + f_{m-1} x^{m-1} + \cdots + f_0,
    \\
    G(x) 
    &
    = g_n x^n + g_{n-1} x^{n-1} + \cdots + g_0,
  \end{split}
\end{equation}
with $0<n\le m$.  We permit $F$ and $G$ to be relatively prime in
general.  For a given integer $d$ satisfying $0<d\le n$, let us
calculate a deformation of $F(x)$ and $G(x)$ in the form of
\begin{equation}
  \label{eq:FtildeGtilde}
  \begin{split}
    \tilde{F}(x)
    &
    = F(x) + \varDelta F(x) = H(x)\cdot \bar{F}(x),
    \\
    \tilde{G}(x) 
    &
    = G(x) + \varDelta G(x) = H(x)\cdot \bar{G}(x),
  \end{split}
\end{equation}
where $\varDelta F(x)$, $\varDelta G(x)$ are polynomials whose degrees
do not exceed those of $F(x)$ and $G(x)$, respectively, $H(x)$ is a
polynomial of degree $d$, and $\bar{F}(x)$ and $\bar{G}(x)$ are
pairwise relatively prime.  If we find $\tilde{F}$, $\tilde{G}$,
$\bar{F}$, $\bar{G}$ and $H$ satisfying \eqref{eq:FtildeGtilde}, then
we call $H$ \textit{an approximate GCD of $F$ and $G$}.  For a given
degree $d$, we tackle the problem of finding an approximate GCD $H$
while minimizing the norm of the deformations $\|\varDelta F(x)\|_2^2 +
\|\varDelta G(x)\|_2^2$.

To make the paper self-contained, we define notations in the theory of
subresultants used below.
\begin{defn}[Sylvester Matrix]
  Let $F$ and $G$ be defined as in \eqref{eq:FG}.  The 
  \emph{Sylvester matrix} of $F$ and $G$, denoted by $N(F,G)$, is an
  $(m+n)\times(m+n)$ matrix constructed from the coefficients of $F$
  and $G$, such that
  \begin{equation*}
    \begin{split}
      N(F,G) &=
      \begin{pmatrix}
        f_m    &        &        & g_n    &        &  \\
        \vdots & \ddots &        & \vdots & \ddots &  \\
        f_0    &        & f_m    & g_0    &        & g_n \\
               & \ddots & \vdots &        & \ddots & \vdots \\
               &        & f_0    &        &        & g_0
      \end{pmatrix}.
      \\[-4mm]
      &\qquad\; \underbrace{\hspace{20mm}}_{n}
      \hspace{3mm} \underbrace{\hspace{17mm}}_{m}
    \end{split}
  \end{equation*}
\end{defn}
\begin{defn}[Subresultant Matrix]
  \label{def:subresmat}
  Let $F$ and $G$ be defined as in \textup{(\ref{eq:FG})}.  For $0\le
  j<n$, the \emph{$j$-th subresultant matrix} of $F$ and $G$, denoted
  by $N_j(F,G)$, is an $(m+n-j)\times(m+n-2j)$ sub-matrix of $N(F,G)$
  obtained by taking the left $n-j$ columns of coefficients of $F$ and
  the left $m-j$ columns of coefficients of $G$, such that
  \begin{equation}
    \label{eq:subresmat}
    \begin{split}
      N_j(F,G) &=
      \begin{pmatrix}
        f_m    &        &        & g_n    &        &  \\
        \vdots & \ddots &        & \vdots & \ddots &  \\
        f_0    &        & f_m    & g_0    &        & g_n \\
               & \ddots & \vdots &        & \ddots & \vdots \\
               &        & f_0    &        &        & g_0
       \end{pmatrix}.
    \\[-4mm]
    &\qquad\; \underbrace{\hspace{20mm}}_{n-j}
    \hspace{3mm} \underbrace{\hspace{17mm}}_{m-j}
    \end{split}
  \end{equation}
\end{defn}
\begin{defn}[Subresultant]
  Let $F$ and $G$ be defined as in \eqref{eq:FG}.  For $0\le j<n$
  and $k=0,\ldots,j$, let $N_{j,k}=N_{j,k}(F,G)$ be a sub-matrix of
  $N_j(F,G)$ obtained by taking the top $m+n-2j-1$ rows and the
  $(m+n-j-k)$-th row (note that $N_{j,k}(F,G)$ is a square matrix).
  Then, the polynomial
  \begin{equation*}
    \subres_j(F,G)
    =|N_{j,j}|x^j+\cdots+|N_{j,0}|x^0
  \end{equation*}
  is called the \emph{$j$-th subresultant} of $F$ and $G$.
\end{defn}

Now, in the case $\tilde{F}(x)$ and $\tilde{G}(x)$ have a GCD of
degree $d$, then the theory of subresultants tells us that the
$(d-1)$-th subresultant of $\tilde{F}$ and $\tilde{G}$ becomes zero,
namely we have
\[
\subres_{d-1}(\tilde{F},\tilde{G})=0.
\]
Then, the $(d-1)$-th subresultant matrix
$N_{d-1}(\tilde{F},\tilde{G})$ has a kernel of dimension equal to $1$.
Thus, there exist polynomials $A(x),B(x)\in\R[x]$ or $\C[x]$
satisfying
\begin{equation}
  \label{eq:abcond}
  A\tilde{F}+B\tilde{G}=0,
\end{equation}
with $\deg(A)<n-d$ and $\deg(B)<m-d$ and $A(x)$ and $B(x)$ are
relatively prime. Therefore, for the given $F(x)$, $G(x)$ and $d$, our
problem is to find $\varDelta F(x)$, $\varDelta G(x)$, $A(x)$ and
$B(x)$ satisfying Eq.\ \eqref{eq:abcond} while making $\|\varDelta
F\|_2^2+\|\varDelta G\|_2^2$ as small as possible.

\subsection{The Real Coefficient Case}
\label{sec:formulation-real}

Assuming that we have $F(x)$ and $G(x)$ as polynomials with the real
coefficients and find an approximate GCD with the real coefficients
as well, we represent $\tilde{F}(x)$, $\tilde{G}(x)$, $A(x)$ and
$B(x)$ with the real coefficients as
\begin{equation}
  \label{eq:fgab}
  \begin{split}
    \tilde{F}(x) &= \tilde{f}_m x^m+\cdots+\tilde{f}_0x^0, \quad
    \tilde{G}(x) = \tilde{g}_n x^n+\cdots+\tilde{g}_0x^0,
    \\
    A(x) &= a_{n-d}x^{n-d}+\cdots+a_0x^0,\quad
    B(x) = b_{m-d}x^{m-d}+\cdots+b_0x^0,
  \end{split}
\end{equation}
respectively, thus $\|\varDelta F\|_2^2+\|\varDelta G\|_2^2$ and Eq.\
\eqref{eq:abcond} become as
\begin{gather}
  \label{eq:objective0}
  \|\varDelta F\|_2^2+\|\varDelta G\|_2^2
  =
  (\tilde{f}_m-f_m)^2+\cdots+(\tilde{f}_0-f_0)^2
  +
  (\tilde{g}_n-g_n)^2+\cdots+(\tilde{g}_0-g_0)^2,
  \\
  \label{eq:abcond2}
  N_{d-1}(\tilde{F},\tilde{G})\cdot \bm{v}=\bm{0},
\end{gather}
respectively, with $N_j(\tilde{F},\tilde{G})$ as in
\eqref{eq:subresmat} and 
\begin{equation}
  \label{eq:abcond2-v}
  \bm{v}={}^t(a_{n-d},\ldots,a_0,b_{m-d},\ldots,b_0).
\end{equation}
Then, Eq.\ \eqref{eq:abcond2} is regarded as a system
of $m+n-d+1$ equations in
$\tilde{f}_m,\ldots,\tilde{f}_0$, $\tilde{g}_n,\ldots,\tilde{g}_0$,
$a_{n-d},\ldots,a_0$, $b_{m-d},\ldots,b_0$, as
\begin{equation}
  \label{eq:constraint0}
  \begin{array}{ccc}
    q_1 = \tilde{f}_m a_{n-d} + \tilde{g}_n b_{m-d} = 0,
    \cdots,
    q_{m+n-d+1} = \tilde{f}_0 a_0 + \tilde{g}_0 b_0 = 0,
  \end{array}
\end{equation}
by putting $q_j$ as the $j$-th row.  Furthermore, for solving the
problem below stably, we add another constraint enforcing
the coefficients of $A(x)$ and $B(x)$ such that
$\|A(x)\|_2^2+\|B(x)\|_2^2=1$; thus we add
\begin{equation}
  \label{eq:constraint1}
  q_0 = a_{n-d}^2+\cdots+a_0^2+b_{m-d}^2+\cdots+b_0^2-1=0
\end{equation}
into Eq.\ \eqref{eq:constraint0}.

Now, we substitute the variables 
\begin{equation}
  \label{eq:var0}
  (\tilde{f}_m,\ldots,\tilde{f}_0, \tilde{g}_n,\ldots,\tilde{g}_0,
  a_{n-d},\ldots,a_0, b_{m-d},\ldots,b_0)
\end{equation}
as $\bm{x}=(x_1,\ldots,x_{2(m+n-d+2)})$, thus Eq.\
\eqref{eq:objective0} and \eqref{eq:constraint0} with
\eqref{eq:constraint1} become
\begin{multline}
  \label{eq:objective1}
  f(\bm{x})=
  (x_1-f_m)^2+
  \cdots+(x_{m+1}-f_0)^2
  \\
  +
  (x_{m+2}-g_n)^2+
  \cdots+(x_{m+n+2}-g_0)^2,
\end{multline}
\begin{equation}
  \label{eq:constraint2}
  \bm{q}(\bm{x})=
  {}^t(q_0(\bm{x}), q_1(\bm{x}), \ldots, q_{m+n-d+1}(\bm{x}))
  =
  \bm{0},
\end{equation}
respectively.  Therefore, the problem of finding an approximate GCD
can be formulated as a constrained minimization problem of finding a
minimizer of the objective function $f(\bm{x})$ in 
\eqref{eq:objective1}, subject to $\bm{q}(\bm{x})=\bm{0}$ in Eq.\
\eqref{eq:constraint2}. 

\subsection{The Complex Coefficient Case}
\label{sec:formulation-complex}

Now let us assume that we have $F(x)$ and $G(x)$ with the complex
coefficients in general, represented as
\begin{equation*}
  \begin{split}
    F(x) &= (f_{m,1}+f_{m,2}\imu) x^m + \cdots + (f_{0,1}+f_{0,2}\imu), \\
    G(x) &= (g_{n,1}+g_{n,2}\imu) x^n + + \cdots + (g_{0,1}+g_{0,2}\imu),
  \end{split}
\end{equation*}
where $f_{j,1}$, $g_{j,1}$, $f_{j,2}$, $g_{j,2}$ are real numbers;
$f_{j,1}$, and $g_{j,1}$ represent the real parts; $f_{j,2}$,
$g_{j,2}$ represent the imaginary parts, with $\imu$ as the imaginary
unit, and find an approximate GCD with the complex coefficients.
Then, we represent $\tilde{F}(x)$, $\tilde{G}(x)$, $A(x)$ and $B(x)$
with the complex coefficients as
\begin{equation}
  \label{eq:fgab-complex}
  \begin{split}
    \tilde{F}(x) &= (\tilde{f}_{m,1}+\tilde{f}_{m,2}\imu) x^m +\cdots+
    (\tilde{f}_{0,1}+\tilde{f}_{0,2}\imu) x^0, \\
    \tilde{G}(x) &= (\tilde{g}_{n,1}+\tilde{g}_{n,2}\imu) x^n +\cdots+
    (\tilde{g}_0x^0+\tilde{g}_{0,2}\imu) x^0,
    \\
    A(x) &= (a_{n-d,1}+a_{n-d,2}\imu) x^{n-d} +\cdots+
    (a_{0,1}+a_{0,2}\imu)x^0,\\ 
    B(x) &= (b_{m-d,1}+b_{m-d,2}\imu) x^{m-d} +\cdots+
    (b_{0,1}+b_{0,2}\imu)x^0, 
  \end{split}
\end{equation}
respectively, where $\tilde{f}_{j,1}$, $\tilde{f}_{j,2}$,
$\tilde{g}_{j,1}$, $\tilde{g}_{j,2}$, $a_{j,1}$, $a_{j,2}$, $b_{j,1}$,
$b_{j,2}$ are real numbers.

For the objective function, $\|\varDelta F\|_2^2+\|\varDelta G\|_2^2$
becomes as
\begin{equation}
  \label{eq:objective-complex}
  \sum_{j=0}^m[(\tilde{f}_{j,1}-f_{j,1})^2 +
  (\tilde{f}_{j,2}-f_{j,2})^2] +
  \sum_{j=0}^n[(\tilde{g}_{j,1}-g_{j,1})^2 +
  (\tilde{g}_{j,2}-g_{j,2})^2].
\end{equation}

For the constraint, Eq.\ \eqref{eq:abcond} becomes as
\begin{multline}
  \label{eq:abcond2-complex}
  \setlength{\arraycolsep}{1pt}
  \begin{pmatrix}
    \tilde{f}_{m,1}+\tilde{f}_{m,2}\imu & & &
    \tilde{g}_{n,1}+\tilde{g}_{n,2}\imu & &  \\
    \vdots      & \ddots &             & \vdots      & \ddots &  \\
    \tilde{f}_{0,1}+\tilde{f}_{0,2}\imu & &
    \tilde{f}_{m,1}+\tilde{f}_{m,2}\imu &
    \tilde{g}_{0,1}+\tilde{g}_{0,2}\imu & &
    \tilde{g}_{n,1}+\tilde{g}_{n,2}\imu \\
    & \ddots & \vdots &    & \ddots      & \vdots \\
    & & \tilde{f}_{0,1}+\tilde{f}_{0,2}\imu & & &
    \tilde{g}_{0,1}+\tilde{g}_{0,2}\imu 
  \end{pmatrix}
  \\
  \times
  \begin{pmatrix}
    a_{n-d,1}+a_{n-d,2}\imu
    \\
    \vdots
    \\
    a_{0,1}+a_{0,2}\imu
    \\
    b_{m-d,1}+b_{m-d,2}\imu
    \\
    \vdots
    \\
    b_{0,1}+b_{0,2}\imu
  \end{pmatrix}
  =
  \bm{0}
  .
\end{multline}
By expressing the subresultant matrix and the column vector in
\eqref{eq:abcond2-complex} separated into the real and the complex parts,
respectively, we express \eqref{eq:abcond2-complex} as
\begin{equation}
  \label{eq:abcond2-complex-2}
  (N_1+N_2\imu)(\bm{v}_1+\bm{v}_2\imu)=\bm{0},
\end{equation}
with
\begin{equation}
  \label{eq:n1n2v1v2}
  \setlength{\arraycolsep}{1pt}
  \begin{array}{c}
    N_1 =
    \setlength{\arraycolsep}{1pt}
    \begin{pmatrix}
      \tilde{f}_{m,1} & & & \tilde{g}_{n,1} & &  \\
      \vdots      & \ddots &             & \vdots      & \ddots &  \\
      \tilde{f}_{0,1} & & \tilde{f}_{m,1} & \tilde{g}_{0,1} & &
      \tilde{g}_{n,1} \\ 
      & \ddots & \vdots &    & \ddots      & \vdots \\
      & & \tilde{f}_{0,1} & & & \tilde{g}_{0,1}
    \end{pmatrix},
    \;
    N_2 =
    \setlength{\arraycolsep}{1pt}
    \begin{pmatrix}
      \tilde{f}_{m,2} & & & \tilde{g}_{n,2} & &  \\
      \vdots      & \ddots &             & \vdots      & \ddots &  \\
      \tilde{f}_{0,2} & & \tilde{f}_{m,2} & \tilde{g}_{0,2} & &
      \tilde{g}_{n,2} \\ 
      & \ddots & \vdots &    & \ddots      & \vdots \\
      & & \tilde{f}_{0,2} & & & \tilde{g}_{0,2}
    \end{pmatrix},
    \\
    \bm{v}_1 = {}^t(a_{n-d,1},\ldots,a_{0,1},b_{m-d,1},\ldots,b_{0,1}),
    \\
    \bm{v}_2 = {}^t(a_{n-d,2},\ldots,a_{0,2},b_{m-d,2},\ldots,b_{0,2}).
  \end{array}
\end{equation}
We can expand the left-hand-side of Eq.\ \eqref{eq:abcond2-complex-2} as
\begin{equation*}
  (N_1+N_2\imu)(\bm{v}_1+\bm{v}_2\imu)=
  (N_1\bm{v}_1-N_2\bm{v}_2)+\imu(N_1\bm{v}_2+N_2\bm{v}_1),
\end{equation*}
thus, Eq.\ \eqref{eq:abcond2-complex-2} is equivalent to a system of
equations 
\begin{equation*}
  N_1\bm{v}_1-N_2\bm{v}_2=\bm{0},\quad
  N_1\bm{v}_2+N_2\bm{v}_1=\bm{0},
\end{equation*}
which is expressed as
\begin{equation}
  \label{eq:abcond2-complex-3}
  \begin{pmatrix}
    N_1 & -N_2 \\
    N_2 & N_1
  \end{pmatrix}
  \begin{pmatrix}
    \bm{v}_1 \\ \bm{v}_2
  \end{pmatrix}
  =
  \bm{0}.
\end{equation}

Furthermore, as well as in the real coefficients case, we add another
constraint for the coefficient of $A(x)$ and $B(x)$ as
\begin{multline}
  \label{eq:abconstraint-complex}
  \|A(x)\|_2^2+\|B(x)\|_2^2
  =(a_{n-d,1}^2+\cdots+a_{0,1}^2)+(b_{m-d,1}^2+\cdots+b_{0,1}^2)
  \\
  +(a_{n-d,2}^2+\cdots+a_{0,2}^2)+(b_{m-d,2}^2+\cdots+b_{0,2}^2)-1=0,
\end{multline}
which can be expressed together with \eqref{eq:abcond2-complex-3} as
\begin{equation}
  \label{eq:abcond2-complex-4}
  \begin{pmatrix}
    {}^t\bm{v}_1 & {}^t\bm{v}_2 & -1 \\
    N_1 & -N_2 & \bm{0} \\
    N_2 & N_1 & \bm{0}
  \end{pmatrix}
  \begin{pmatrix}
    \bm{v}_1 \\ \bm{v}_2 \\ 1
  \end{pmatrix}
  =
  \bm{0},
\end{equation}
where Eq.\ \eqref{eq:abconstraint-complex} has been put on the top of
Eq.\ \eqref{eq:abcond2-complex-3}.  Note that, in Eq.\
\eqref{eq:abcond2-complex-4}, we have total of $2(m+n-d+1)+1$
equations in the coefficients of polynomials in
\eqref{eq:fgab-complex} as a constraint, with the $j$-th row of which
is expressed as $q_j=0$, as similarly as in the real case
\eqref{eq:constraint0} with \eqref{eq:constraint1}.

Now, as in the real case, we substitute the variables
\begin{multline}
  \label{eq:var0-complex}
  (
  \tilde{f}_{m,1},\ldots,\tilde{f}_{0,1},
  \tilde{g}_{n,1},\ldots,\tilde{g}_{0,1},
  \tilde{f}_{m,2},\ldots,\tilde{f}_{0,2},
  \tilde{g}_{n,2},\ldots,\tilde{g}_{0,2},
  \\
  a_{n-d,1},\ldots,a_{0,1},
  b_{m-d,1},\ldots,b_{0,1},
  a_{n-d,2},\ldots,a_{0,2},
  b_{m-d,2},\ldots,b_{0,2}
  )
\end{multline}
as $\bm{x}=(x_1,\ldots,x_{4(m+n-d+2)})$, thus Eq.\
\eqref{eq:objective-complex} and \eqref{eq:abcond2-complex-4} become
as
\begin{align}
  f(\bm{x})
  =
  &
  (x_1-f_{m,1})^2+\cdots+(x_{m+1}-f_{0,1})^2
  \nonumber
  \\
  &
  +
  (x_{m+2}-g_{n,1})^2+\cdots+(x_{m+n+2}-g_{0,1})^2
  \nonumber
  \\
  &
  +
  (x_{m+n+3}-f_{m,2})^2+\cdots+(x_{2m+n+3}-f_{0,2})^2
  \nonumber
  \\
  &
  +
  (x_{2m+n+4}-g_{n,2})^2+\cdots+(x_{2(m+n+2)}-g_{0,2})^2,
  \label{eq:objective-complex2}
  \\
  \bm{q}(\bm{x})
  =
  &
  \;
  {}^t(q_1(\bm{x}), \ldots, q_{2(m+n-d+1)+1}(\bm{x}))
  =
  \bm{0},
  \label{eq:constraint-complex2}
\end{align}
respectively.  Therefore, the problem of finding an approximate GCD
can be formulated as a constrained minimization problem of finding a
minimizer of the objective function $f(\bm{x})$ in Eq.\
\eqref{eq:objective-complex2}, subject to $\bm{q}(\bm{x})=\bm{0}$ in Eq.\
\eqref{eq:constraint-complex2}.

\section{The Gradient-Projection Method
  and the Modified Newton Method} 
\label{sec:gp}

In this section, we consider the problem of minimizing an objective
function $f(\bm{x}):\R^n\rightarrow\R$, subject to the constraints
$\bm{q}(\bm{x})=\bm{0}$ for
$\bm{q}(\bm{x})={}^t(q_1(\bm{x}),q_2(\bm{x}),\ldots,q_m(\bm{x}))$,
with $m\le n$, where $q_j(\bm{x})$ is a function of
$\R^n\rightarrow\R$, and $f(\bm{x})$ and $q_j(\bm{x})$ are twice
continuously differentiable (here, we refer presentations of the
problem to 
\citet{tan1980} and the references therein).

If we assume that the Jacobian matrix
\[
J_{\bm{q}}(\bm{x})=
\left(
  \frac{\partial q_i}{\partial x_j}
\right)
\]
is of full rank, or
\begin{equation}
  \label{eq:rankj}
  \rank(J_{\bm{q}}(\bm{x}))=m,  
\end{equation}
on the feasible region $V_{\bm{q}}$ defined by
\[
V_{\bm{q}}=\{\bm{x}\in\R^n\mid \bm{q}(\bm{x})=\bm{0}\},
\]
then the feasible region $V_{\bm{q}}$ is an $(n-m)$-dimensional
differential manifold in $\R^n$ and $f$ is differentiable function on
the manifold $V_{\bm{q}}$.  Thus, our problem is to find a point in
$V_{\bm{q}}$, which will be a candidate of a local minimizer,
satisfying the well-known ``first-order necessary conditions'' (for
the proof, refer to the literature on optimization such as
\citet{noc-wri2006}).
\begin{thm}[First-order necessary conditions]
  \label{thm:fonc}
  Suppose that $\bm{x}^*\in V_{\bm{q}}$ is a local solution of the
  problem in the above, that the functions $f(\bm{x})$ and
  $\bm{q}(\bm{x})$ are continuously differentiable at $\bm{x}^*$, and
  that we have \eqref{eq:rankj} at $\bm{x}^*$.  Then, there exist a
  \emph{Lagrange multiplier vector} $\bm{\lambda^*}\in\R^m$ satisfying
  \[
  \nabla f(\bm{x}^*)-{}^t(J_{\bm{q}}(\bm{x}^*))\bm{\lambda}^*=\bm{0},
  \quad
  \bm{q}(\bm{x}^*)=\bm{0}.
  \qed
  \]
\end{thm}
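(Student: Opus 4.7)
The plan is to reduce the constrained problem locally to an unconstrained one by parameterising $V_{\bm{q}}$ via the implicit function theorem, and then to read off the multipliers from the orthogonal decomposition of $\R^n$ induced by $J_{\bm{q}}(\bm{x}^*)$.

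First I would invoke the implicit function theorem at $\bm{x}^*$. Since $\rank J_{\bm{q}}(\bm{x}^*)=m$, after a permutation of coordinates we may split $\bm{x}=(\bm{u},\bm{w})$ with $\bm{u}\in\R^{n-m}$, $\bm{w}\in\R^m$ so that the $m\times m$ block $\partial\bm{q}/\partial\bm{w}$ is nonsingular at $\bm{x}^*$. The theorem then yields a $C^1$ map $\bm{w}=\bm{\varphi}(\bm{u})$ defined on a neighbourhood of $\bm{u}^*$ satisfying $\bm{q}(\bm{u},\bm{\varphi}(\bm{u}))\equiv\bm{0}$ and $\bm{\varphi}(\bm{u}^*)=\bm{w}^*$. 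This exhibits $V_{\bm{q}}$ near $\bm{x}^*$ as the graph of $\bm{\varphi}$, an $(n-m)$-dimensional $C^1$ manifold.

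Next I would identify the tangent space $T_{\bm{x}^*}V_{\bm{q}}$ with $\ker J_{\bm{q}}(\bm{x}^*)$. The inclusion $T_{\bm{x}^*}V_{\bm{q}}\subseteq\ker J_{\bm{q}}(\bm{x}^*)$ is immediate: differentiating $\bm{q}(\bm{\gamma}(t))\equiv\bm{0}$ along any $C^1$ curve $\bm{\gamma}$ in $V_{\bm{q}}$ through $\bm{x}^*$ gives $J_{\bm{q}}(\bm{x}^*)\bm{\gamma}'(0)=\bm{0}$. The reverse inclusion is where the constraint qualification bites: given $\bm{v}\in\ker J_{\bm{q}}(\bm{x}^*)$, split $\bm{v}=(\bm{v}_u,\bm{v}_w)$ and take $\bm{\gamma}(t)=(\bm{u}^*+t\bm{v}_u,\bm{\varphi}(\bm{u}^*+t\bm{v}_u))$; implicit differentiation of $\bm{q}\equiv\bm{0}$ together with the nonsingularity of $\partial\bm{q}/\partial\bm{w}$ forces $\bm{\gamma}'(0)=\bm{v}$, producing a feasible $C^1$ curve with prescribed velocity.

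With this in hand, the variational argument finishes the proof: for each $\bm{v}\in\ker J_{\bm{q}}(\bm{x}^*)$, the function $t\mapsto f(\bm{\gamma}(t))$ has a local minimum at $t=0$, so its derivative $\langle\nabla f(\bm{x}^*),\bm{v}\rangle$ vanishes, i.e.\ $\nabla f(\bm{x}^*)\perp\ker J_{\bm{q}}(\bm{x}^*)$. By the standard orthogonal decomposition, $(\ker J_{\bm{q}}(\bm{x}^*))^{\perp}=\mathrm{range}\bigl({}^t J_{\bm{q}}(\bm{x}^*)\bigr)$, so there exists $\bm{\lambda}^*\in\R^m$ with $\nabla f(\bm{x}^*)={}^t J_{\bm{q}}(\bm{x}^*)\bm{\lambda}^*$, together with the feasibility $\bm{q}(\bm{x}^*)=\bm{0}$. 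The main obstacle is the tangent-space identification, specifically the surjective part, which is exactly the point where the full-rank hypothesis \eqref{eq:rankj} is essential; without it one could only assert $\nabla f(\bm{x}^*)\perp T_{\bm{x}^*}V_{\bm{q}}$ but not translate this into a linear combination of the rows of $J_{\bm{q}}(\bm{x}^*)$.
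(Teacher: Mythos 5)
Your proof is correct, but note that the paper does not actually prove Theorem~\ref{thm:fonc}: it states the result as standard and defers to the optimization literature (Nocedal and Wright), marking the statement itself with a \qed. What you have written is precisely the classical argument from that literature: parameterise $V_{\bm{q}}$ locally via the implicit function theorem using the full-rank hypothesis \eqref{eq:rankj}, identify the tangent space with $\ker J_{\bm{q}}(\bm{x}^*)$ (your verification that $\bm{\gamma}'(0)=\bm{v}$ via $D\bm{\varphi}=-(\partial\bm{q}/\partial\bm{w})^{-1}\partial\bm{q}/\partial\bm{u}$ is the essential step, and you correctly locate it as the place where the constraint qualification is used), then conclude $\nabla f(\bm{x}^*)\perp\ker J_{\bm{q}}(\bm{x}^*)=\bigl(\mathrm{range}\,{}^t J_{\bm{q}}(\bm{x}^*)\bigr)^{\perp}$ and read off $\bm{\lambda}^*$. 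One pedantic point: the implicit function theorem needs $\bm{q}$ to be $C^1$ on a neighbourhood of $\bm{x}^*$, not merely ``continuously differentiable at $\bm{x}^*$'' as the theorem statement literally says; this is harmless here because Section~\ref{sec:gp} assumes from the outset that $f$ and the $q_j$ are twice continuously differentiable, but it is worth being aware that your argument uses the stronger standing assumption rather than the local hypothesis in the statement.
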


\subsection{The Gradient-Projection Method}

Let $\bm{x}_k\in\R^n$ be a feasible point, or a point satisfying
$\bm{x}_k\in V_{\bm{q}}$.  Rosen's gradient projection method
(\cite{ros1961}) is based on projecting the steepest descent direction
onto the tangent space of the manifold $V_{\bm{q}}$ at $\bm{x}_k$,
which is denoted to $T_{\bm{x}_k}$ and represented by the kernel of
the Jacobian matrix $J_{\bm{q}}(\bm{x}_k)$ as
\begin{equation}
  \label{eq:tx}
  T_{\bm{x}_k} = \ker(J_{\bm{q}}(\bm{x}_k))=\{\bm{z}\in\R^n\mid
  J_{\bm{q}}(\bm{x}_k)\bm{z}=\bm{0}\in\R^m\}.
\end{equation}
We have steepest descent direction of the objective function $f$ at
$\bm{x}_k$ as
\begin{equation}
  \label{eq:-nablaf}
  -\nabla f(\bm{x}_k)=-{}^t
  \left(
    \frac{\partial f}{\partial x_1},
    \ldots,
    \frac{\partial f}{\partial x_n}
  \right)
  .
\end{equation}
Then, the search direction $\bm{d}_k$ is defined by the projection of the
steepest descent direction of $f$ in \eqref{eq:-nablaf} onto
$T_{\bm{x}_k}$ in \eqref{eq:tx} as
\begin{equation}
  \label{eq:projection}
  \bm{d}_k=-P(\bm{x}_k)\nabla f(\bm{x}_k).  
\end{equation}
Here, $P(\bm{x}_k)$ is the orthogonal projection operator on
$T_{\bm{x}_k}$ defined as
\[
P(\bm{x}_k)=I-(J_{\bm{q}}(\bm{x}_k))^+(J_{\bm{q}}(\bm{x}_k)),
\]
where $I$ is the identity matrix and $(J_{\bm{q}}(\bm{x}_k))^+$ is the
Moore-Penrose inverse of $(J_{\bm{q}}(\bm{x}_k))$.  Under the
assumption \eqref{eq:rankj}, we have
\[
(J_{\bm{q}}(\bm{x}_k))^+={}^t(J_{\bm{q}}(\bm{x}_k))\cdot
(J_{\bm{q}}(\bm{x}_k)\cdot{}^t(J_{\bm{q}}(\bm{x}_k)))^{-1} 
\]
(see \citet[Eq.\ (8)]{tan1980}).

With an appropriate step width $\alpha_k$ (see
Remark~\ref{rem:stepwidth}) satisfying $0<\alpha_k\le 1$, let 
\[
\bm{y}_k=\bm{x}_k+\alpha_k\cdot \bm{d}_k.
\]
Since $V_{\bm{q}}$ is nonlinear in general, $\bm{y}_k$ may not in
$V_{\bm{q}}$: in such a case, we take a \textit{restoration} move to
bring $\bm{y}_k$ back to $V_{\bm{q}}$, as follows.  Let $\bm{x}\in\R^n$
be an arbitrary point. Then, at $\bm{y}_k$, the constraint
$\bm{q}(\bm{x})$ can be linearly approximated as
\[
\bm{q}(\bm{y}_k+\bm{x})\simeq \bm{q}(\bm{y}_k)+J_{\bm{q}}(\bm{y}_k)\bm{x}.
\]
Assuming $\bm{y}_k+\bm{x}\in V_{\bm{q}}$, we have
$\bm{q}(\bm{y}_k+\bm{x})=\bm{0}$ thus the approximation of $\bm{x}$
can be calculated as
\begin{equation}
  \label{eq:restore}
  \bm{x}=-(J_{\bm{q}}(\bm{y}_k))^+ \bm{q}(\bm{y}_k).  
\end{equation}
If $\bm{y}_k$ is sufficiently close to $V_{\bm{q}}$, then we can restore
$\bm{y}_k$ back onto $V_{\bm{q}}$ by applying \eqref{eq:restore}
iteratively for several times.  Note that the restoration move can also
be used in the case the initial point of the minimization process is
away from the feasible region $V_{\bm{q}}$.

Summarizing the above, we obtain an algorithm for the gradient
projection as follows.
\begin{alg}[The gradient-projection method (\cite{ros1961})]
  \label{alg:gp}
  \mbox{\\}
  \begin{description}
  \item[Step 1] \textbf{[Restoration]} If the given point $\bm{x}_0$
    does not satisfy $\bm{x}_0\in V_{\bm{q}}$, first move $\bm{x}_0$
    onto $V_{\bm{q}}$ by the iteration of Eq.\ \eqref{eq:restore},
    then let $\bm{x}_0$ be the restored point on $V_{\bm{q}}$.  Let
    $k=0$.
  \item[Step 2] \textbf{[Projection]} For $\bm{x}_k$, calculate
    $\bm{d}_k=-P(\bm{x}_k)\nabla f(\bm{x}_k)$ by
    \eqref{eq:projection}.  If $\|\bm{d}_k\|$ is sufficiently small
    for an appropriate norm, go to Step 4.  Otherwise, calculate the
    step width $\alpha_k$ by an appropriate line search method (see
    Remark~\ref{rem:stepwidth}) then let
    $\bm{y}_{k,0}=\bm{x}_k+\alpha_k\bm{d}_k$.
  \item[Step 3] \textbf{[Restoration]} If $\bm{q}(\bm{y}_{k,0})\ne
    \bm{0}$, move $\bm{y}_{k,0}$ back onto $V_{\bm{q}}$ iteratively by
    \eqref{eq:restore}.  Let
    $\bm{y}_{k,l+1}=\bm{y}_{k,l}-(J_{\bm{q}}(\bm{y}_{k,l}))^+
    \bm{q}(\bm{y}_{k,l})$ for $l=0,1,2,\ldots$.  When $\bm{y}_{k,l}$
    satisfies $\bm{q}(\bm{y}_{k,l})\simeq \bm{0}$, then let
    $\bm{x}_{k+1}=\bm{y}_{k,l}$ and go to Step 2.
  \item[Step 4] \textbf{[Checking the first-order necessary
      conditions]} If $\bm{x}_k$ satisfies Theorem~\ref{thm:fonc},
    then return $\bm{x}_k$.
  \end{description}
\end{alg}
\begin{rem}
  \label{rem:stepwidth}
  Choosing appropriate step width in the iteration is a fundamental
  issue in optimization method and is discussed in standard literature
  of optimization (\textit{e.g.}\ \cite{noc-wri2006}).  Although we
  simply set $\alpha_k=1$ in our implementation, more sophisticated
  calculation of step width might improve accuracy and/or convergence
  of the algorithm (see also concluding remarks
  (Section~\ref{sec:remark})).
\end{rem}

\subsection{The Modified Newton Method}

The modified Newton method by
\citet{tan1980} is a generalization
of the Newton's method, which derives several different methods, by
modifying the Hessian of the Lagrange function.  A generalization of
the gradient-projection method combines the \textit{restoration step}
and the \textit{projection step} in Algorithm~\ref{alg:gp}.  For
$\bm{x}_k\in V_{\bm{q}}$, we calculate the search direction
$\bm{d}_k$, along with the associated Lagrange multipliers
$\bm{\lambda}_{k+1}$, by solving a linear system
\begin{equation}
  \label{eq:modnewton}
  \begin{pmatrix}
    I & -{}^t(J_{\bm{q}}(\bm{x}_k)) \\
    J_{\bm{q}}(\bm{x}_k) & \bm{O}
  \end{pmatrix}
  \begin{pmatrix}
    \bm{d}_k \\ \bm{\lambda}_{k+1}
  \end{pmatrix}
  =
  -
  \begin{pmatrix}
    \nabla f(\bm{x}_k) \\
    \bm{q}(\bm{x}_k)
  \end{pmatrix}
  ,
\end{equation}
then put $\bm{x}_{k+1}=\bm{x}_k+\alpha_k\cdot \bm{d}_k$ with an
appropriate step width $\alpha_k$.  Solving Eq.\ \eqref{eq:modnewton}
under assumption \eqref{eq:rankj}, we have
\begin{equation}
  \label{eq:modnewtonsol}
  \begin{split}
    \bm{d}_k &= -P(\bm{x}_k)\nabla f(\bm{x}_k) 
    -(J_{\bm{q}}(\bm{x}_k))^+\bm{q}(\bm{x}_k),
    \\
    \bm{\lambda}_{k+1} &= {}^t((J_{\bm{q}}(\bm{x}_k))^+) \nabla
    f(\bm{x}_k)
    - (J_{\bm{q}}(\bm{x}_k)\cdot {}^t(J_{\bm{q}}(\bm{x}_k)))^{-1}
    \bm{q}(\bm{x}_k).  
  \end{split}
\end{equation}
Note that, in $\bm{d}_k$ in \eqref{eq:modnewtonsol}, the term
$-P(\bm{x}_k)\nabla f(\bm{x}_k)$ comes from the projection
\eqref{eq:projection}, while another term
$-(J_{\bm{q}}(\bm{x}_k))^+\bm{q}(\bm{x}_k)$ comes from the restoration
\eqref{eq:restore}.  If we have $\bm{x}_k\in V_{\bm{q}}$, the
iteration formula \eqref{eq:modnewton} is equivalent to the projection
\eqref{eq:projection}.  After an iteration, the new estimate
$\bm{x}_{k+1}$ may not satisfy $\bm{x}_{k+1}\in V_{\bm{q}}$: in such a
case, in the next iteration, the point will be pulled back onto
$V_{\bm{q}}$ by the $-(J_{\bm{q}}(\bm{x}_k))^+\bm{q}(\bm{x}_k)$ term.
Therefore, by solving Eq.\ \eqref{eq:modnewton} iteratively, we expect
that the approximations $\bm{x}_k$ moves toward descending direction
of $f$ along with tracing the feasible set $V_{\bm{q}}$.

Summarizing the above, we obtain an algorithm as follows.
\begin{alg}[The modified Newton method (\cite{tan1980})]
  \label{alg:gpnewton}
  \upshape 
  \mbox{\\}
  \begin{description}
  \item[Step 1] \textbf{[Finding a search direction]} For $\bm{x}_k$,
    calculate $\bm{d}_k$ by solving the linear system
    \eqref{eq:modnewton}.  If $\|\bm{d}_k\|$ is sufficiently small, go
    to Step 2.  Otherwise, calculate the step width $\alpha_k$ by an
    appropriate line search method (see Remark~\ref{rem:stepwidth}),
    let $\bm{x}_{k+1}=\bm{x}_k+\alpha_k\bm{d}_k$, then go to Step 1.
  \item[Step 2] \textbf{[Checking the first-order necessary
      conditions]} If $\bm{x}_k$ satisfies Theorem~\ref{thm:fonc} with
    sufficient accuracy, then return $\bm{x}_k$.
  \end{description}
\end{alg}

\section{The Algorithm for Approximate GCD}
\label{sec:gpgcd}

In applying the gradient-projection method or the modified Newton method
to the approximate GCD problem, we discuss issues in the construction
of the algorithm in detail, such as
\begin{itemize}
\item Representation of the Jacobian matrix $J_{\bm{q}}(\bm{x})$
  (Section \ref{sec:jacobianmat}),
\item Stability of the algorithm by certifying that $J_q(\bm{x})$ has
  full rank (Section \ref{sec:jacobianmatrank}),
\item Setting the initial values (Section~\ref{sec:init}),
\item Regarding the minimization problem as the minimum distance
  problem (Section~\ref{sec:mindistance}),
\item Calculating the actual GCD and correcting the coefficients of
  $\tilde{F}$ and $\tilde{G}$ (Section~\ref{sec:correct}),
\end{itemize}
as follows.  After presenting the algorithm, we give a modification
for preserving monicity for the real coefficient case and running time
analysis, and end this section with examples.

\subsection{Representation of the  Jacobian Matrix}
\label{sec:jacobianmat}

For a polynomial $P(x)\in\R[x]$ or $\C[x]$ represented as
\[
P(x) = p_nx^n+\cdots+p_0x^0,
\]
let $C_k(P)$ be a complex $(n+k,k+1)$ matrix defined as
\[
\begin{array}{ccl}
  C_k(P) & = & 
  \begin{pmatrix}
    p_n & & \\
    \vdots & \ddots & \\
    p_0 & & p_n \\
    & \ddots & \vdots \\
    & & p_0
  \end{pmatrix}
  .
  \\[-2mm]
  & & 
  \hspace{3mm}
  \underbrace{\hspace{19mm}}_{k+1}
\end{array}
\]

We show the Jacobian matrix in the real and the complex coefficient
cases, both of which can easily be constructed in every iteration in
Algorithms~\ref{alg:gp} and \ref{alg:gpnewton}.

\subsubsection{The Real Coefficient Case}

For co-factors $A(x)$ and $B(x)$ as in \eqref{eq:fgab}, consider
matrices $C_m(A)$ and $C_n(B)$.  Then, by the definition of the
constraint \eqref{eq:constraint2}, we have the Jacobian matrix
$J_{\bm{q}}(\bm{x})$ (with the original notation of variables for
$\bm{x}$ as in \eqref{eq:var0}) as
\begin{equation}
  \label{eq:jacobian-real}
  J_{\bm{q}}(\bm{x}) =
  \begin{pmatrix}
    \bm{0} & \bm{0} & 2\cdot{}^t\bm{v}\\
    C_m(A) & C_n(B) & N_{d-1}(\tilde{F},\tilde{G})
  \end{pmatrix}
  ,
\end{equation}
with $N_j(\tilde{F},\tilde{G})$ as in \eqref{eq:subresmat} and $\bm{v}$
as in \eqref{eq:abcond2-v}, respectively. Note that the matrix
$J_{\bm{q}}(\bm{x})$ has $m+n-d+2$ rows and $2(m+n-d+2)$ columns.

\subsubsection{The Complex Coefficient Case}

For co-factors $A(x)$ and $B(x)$ as in \eqref{eq:fgab-complex},
consider matrices $C_m(A)$ and $C_n(B)$ and express them as the sum of
matrices consisting of the real and the imaginary parts of whose
elements, respectively, as
\begin{equation*}
  \begin{split}
    C_m(A) &=
    \begin{pmatrix}
      a_{n-d,1} & & \\
      \vdots & \ddots & \\
      a_{0,1} & & a_{n-d,1} \\
      & \ddots & \vdots \\
      & & a_{0,1}
    \end{pmatrix}
    +\imu
    \begin{pmatrix}
      a_{n-d,2} & & \\
      \vdots & \ddots & \\
      a_{0,2} & & a_{n-d,2} \\
      & \ddots & \vdots \\
      & & a_{0,2}
    \end{pmatrix}
    \\
    &
    = C_m(A)_1 + \imu C_m(A)_2,
    \\
    C_n(B) &=
    \begin{pmatrix}
      b_{m-d,1} & & \\
      \vdots & \ddots & \\
      b_{0,1} & & b_{m-d,1} \\
      & \ddots & \vdots \\
      & & b_{0,1}
    \end{pmatrix}
    +\imu
    \begin{pmatrix}
      b_{m-d,2} & & \\
      \vdots & \ddots & \\
      b_{0,2} & & b_{m-d,2} \\
      & \ddots & \vdots \\
      & & b_{0,2}
    \end{pmatrix}
    \\
    &
    = C_n(B)_1 + \imu C_n(B)_2,
  \end{split}
\end{equation*}
respectively, and define
\begin{equation}
  \label{eq:a1a2}
  \begin{split}
    A_1 &= [C_m(A)_1\; C_n(B)_1]
    =
    \setlength{\arraycolsep}{1pt}
    \begin{pmatrix}
      a_{n-d,1} & & & b_{m-d,1} & & \\
      \vdots & \ddots & & \vdots & \ddots & \\
      a_{0,1} & & a_{n-d,1} & b_{0,1} & & b_{m-d,1} \\
      & \ddots & \vdots & & \ddots & \vdots \\
      & & a_{0,1} & & & b_{0,1}
    \end{pmatrix}
    ,
    \\
    A_2 &= [C_m(A)_2\; C_n(B)_2]
    =
    \setlength{\arraycolsep}{1pt}
    \begin{pmatrix}
      a_{n-d,2} & & & b_{m-d,2} & & \\
      \vdots & \ddots & & \vdots & \ddots & \\
      a_{0,2} & & a_{n-d,2} & b_{0,2} & & b_{m-d,2} \\
      & \ddots & \vdots & & \ddots & \vdots \\
      & & a_{0,2} & & & b_{0,2}
    \end{pmatrix}
    .
  \end{split}
\end{equation}
(Note that $A_1$ and $A_2$ are matrices of the real numbers of
$m+n-d+1$ rows and $m+n+2$ columns.)  Then, by the definition of the
constraint \eqref{eq:constraint-complex2}, we have the Jacobian matrix
$J_{\bm{q}}(\bm{x})$ (with the original notation of variables for
$\bm{x}$ as in \eqref{eq:var0-complex}) as
\begin{equation}
  \label{eq:jacobian-complex}
  J_{\bm{q}}(\bm{x}) =
  \begin{pmatrix}
    \bm{0} & \bm{0} & 2\cdot{}^t\bm{v}_1 & 2\cdot{}^t\bm{v}_2 \\
    A_1 & -A_2 & N_1 & -N_2 \\
    A_2 & A_1 & N_2 & N_1
  \end{pmatrix}
  ,
\end{equation}
with $A_1$ and $A_2$ as in \eqref{eq:a1a2} and $N_1$, $N_2$,
$\bm{v}_1$ and $\bm{v}_2$ as in \eqref{eq:n1n2v1v2}, respectively.

\subsection{Stability of the Algorithm}
\label{sec:jacobianmatrank}

In this paper, we treat the notion of ``stability'' of the algorithm
as to keep that the Jacobian $J_{\bm{q}}(\bm{x})$ in
Algorithms~\ref{alg:gp} and \ref{alg:gpnewton} has full rank, whereas
we usually discuss \textit{stability} as a notion in backward and/or
forward error analysis of numerical algorithms \cite{hig2002}.

In executing Algorithm~\ref{alg:gp} or \ref{alg:gpnewton}, we need the
algorithm to be \textit{stable} in the sense that we need to keep that
$J_{\bm{q}}(\bm{x})$ has full rank: otherwise, we cannot correctly
calculate $(J_{\bm{q}}(\bm{x}))^+$ (in Algorithm~\ref{alg:gp}) or the
matrix in \eqref{eq:modnewton} becomes singular (in
Algorithm~\ref{alg:gpnewton}) thus we are unable to decide proper
search direction.  For this requirement, we have the following
observations.

\begin{prop}
  \label{prop:fullrank}
  Let $\bm{x}^*\in V_{\bm{q}}$ be any feasible point satisfying Eq.\
  \eqref{eq:constraint2}.  Then, if the corresponding polynomials do
  not have a GCD whose degree exceeds $d$, then $J_{\bm{q}}(\bm{x}^*)$
  has full rank.
\end{prop}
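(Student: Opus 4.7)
The plan is to prove full row rank by contradiction. Suppose a row vector $c = (c_0, c_1, \ldots, c_{m+n-d+1})$ satisfies $c\cdot J_{\bm q}(\bm x^*) = \bm 0$, and set $c' = (c_1, \ldots, c_{m+n-d+1})$. Reading off the block structure of \eqref{eq:jacobian-real}, this splits into
\begin{equation*}
{}^t c' \cdot [C_m(A)\ \; C_n(B)] = \bm 0 \quad \text{and} \quad 2 c_0 \cdot {}^t\bm v + {}^t c' \cdot N_{d-1}(\tilde F, \tilde G) = \bm 0.
\end{equation*}
The whole argument reduces to showing that the first of these forces $c' = \bm 0$; the second then reads $2 c_0 \cdot {}^t\bm v = \bm 0$, and $c_0 = 0$ follows from $\bm v \ne \bm 0$ (guaranteed by the normalization \eqref{eq:constraint1}).

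To show $[C_m(A)\ \; C_n(B)]$ has full row rank $m+n-d+1$, I would interpret it as the matrix of the multiplication map $T: \R[x]_{\le m} \oplus \R[x]_{\le n} \to \R[x]_{\le m+n-d}$ sending $(P,Q)\mapsto AP + BQ$. Under the hypothesis, $\deg\gcd(\tilde F, \tilde G) = d$ exactly: the lower bound comes from feasibility (since $\bm v \ne \bm 0$ lies in $\ker N_{d-1}(\tilde F,\tilde G)$), and the upper bound is assumed. Writing $\tilde F = H\bar F$, $\tilde G = H\bar G$ with $\gcd(\bar F,\bar G)=1$, $\deg\bar F = m-d$, $\deg\bar G = n-d$, dividing $A\tilde F + B\tilde G = 0$ by $H$ and using coprimality yields $A = \alpha \bar G$ and $B = -\alpha \bar F$ for some nonzero scalar $\alpha$; in particular both degree bounds are saturated ($\deg A = n-d$, $\deg B = m-d$). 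A direct computation then gives $\ker T = \{(\bar F R, \bar G R) : R \in \R[x]_{\le d}\}$, hence $\dim\operatorname{Im} T = (m+1)+(n+1)-(d+1) = m+n-d+1$, matching the codomain dimension. Thus $T$ is surjective, $[C_m(A)\ \; C_n(B)]$ has full row rank, and $c' = \bm 0$ as required.

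For the complex case \eqref{eq:jacobian-complex}, the same strategy works: the two lower block rows of $J_{\bm q}(\bm x^*)$ are the real realization of the complex multiplication map $T_\C: \C[x]_{\le m} \oplus \C[x]_{\le n} \to \C[x]_{\le m+n-d}$, whose kernel dimension is computed analogously and which is therefore surjective over $\C$; this yields real row rank $2(m+n-d+1)$ for the lower rows, and independence of the top row follows from $(\bm v_1, \bm v_2) \ne \bm 0$. The main obstacle I anticipate is the rigidity step identifying $A = \alpha\bar G$, $B = -\alpha\bar F$: this depends on the degree bounds on $A, B$ being tight, which in turn requires both coprimality of $\bar F, \bar G$ and $\deg H = d$ exactly, so this is the step where the hypothesis on $\gcd(\tilde F, \tilde G)$ is essentially used.
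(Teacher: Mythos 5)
Your proof is correct, and it rests on the same two structural facts as the paper's --- that the cofactor block $[C_m(A)\ \; C_n(B)]$ has full row rank $m+n-d+1$, and that the normalization row is independent of the rest because $\bm{v}\ne\bm{0}$ --- but it establishes the key rank fact by a genuinely different route. The paper argues with columns: it extracts from the cofactor block the $m+n-2d$ columns whose nonzero part is exactly the Sylvester matrix $N(A,B)$ (nonsingular because $A$ and $B$ are coprime), augments them with $d+1$ further columns using the lower-triangular structure, and finally appends one column of the right block having a nonzero top entry to obtain a basis of $\R^{m+n-d+2}$. You instead work with the left null space, identify the block as the multiplication map $(P,Q)\mapsto AP+BQ$, compute its kernel exactly via the rigidity $A=\alpha\bar{G}$, $B=-\alpha\bar{F}$, and conclude surjectivity by rank--nullity. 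Your route has two advantages: it makes explicit a step the paper glosses over (why feasibility together with the degree-$d$ hypothesis forces $A$ and $B$ to be coprime with saturated degree bounds --- the paper simply asserts ``by the assumption, $A$ and $B$ are relatively prime''), and in the complex case it replaces the paper's explicit column manipulation ($A_1T=R$ lower triangular, followed by elimination of $A_2T$) with the standard fact that the real realization $\left(\begin{smallmatrix}M_1 & -M_2\\ M_2 & M_1\end{smallmatrix}\right)$ of a complex matrix has twice its complex rank. What the paper's version buys in exchange is an explicit independent column set, closer in spirit to the matrix manipulations used elsewhere in the algorithm. Both arguments share the same implicit nondegeneracy assumption (that $\tilde{F}$ and $\tilde{G}$ retain degrees $m$ and $n$), so your proposal is not less rigorous than the original on that point.
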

\begin{proof}
  We prove the proposition in the real and the complex coefficient
  cases separately.

  \subsubsection{The Real Coefficient Case}

  Let $\bm{x}^*=(\tilde{f}_m,\ldots,\tilde{f}_0$,
  $\tilde{g}_n,\ldots,\tilde{g}_0$, $a_{n-d}\ldots,a_0$,
  $b_{m-d},\ldots,b_0)$ with its polynomial representation expressed
  as in \eqref{eq:fgab} (note that this assumption permits the
  polynomials $\tilde{F}(x)$ and $\tilde{G}(x)$ to be relatively prime
  in general).  To verify our claim, we show that we have
  $\rank(J_{\bm{q}}(\bm{x}^*))=m+n-d+2$ with $J_{\bm{q}}(\bm{x}^*)$ as
  in \eqref{eq:jacobian-real}.  Let us express $J_{\bm{q}}(\bm{x}^*)=
  \begin{pmatrix}
    J_\mathrm{L} \mid J_\mathrm{R}
  \end{pmatrix}
  $, where $J_\mathrm{L}$ and $J_\mathrm{R}$ are column blocks
  expressed as
  \[
  J_\mathrm{L} =
  \begin{pmatrix}
    \bm{0} & \bm{0} \\
    C_m(A) & C_n(B)
  \end{pmatrix}
  ,
  \quad
  J_\mathrm{R} =
  \begin{pmatrix}
    2\cdot\bm{v} \\
    N_{d-1}(\tilde{F},\tilde{G})
  \end{pmatrix}
  ,
  \]
  respectively.  Then, we have the following lemma.
  \begin{lem}
    \label{lem:welldefined}
    We have $\rank(J_\mathrm{L})=m+n-d+1$.
  \end{lem}
  \begin{proof}
    Let us express $J_\mathrm{L}=
    \begin{pmatrix}
      J_\mathrm{LL} \mid J_\mathrm{LR}
    \end{pmatrix}
    $, where
    \[
    J_\mathrm{LL} = 
    \begin{pmatrix}
      \bm{0} \\
      C_m(A)
    \end{pmatrix}
    ,
    \quad
    J_\mathrm{LR} = 
    \begin{pmatrix}
      \bm{0} \\
      C_n(B)
    \end{pmatrix}
    ,
    \]
    and let $\bar{J}_\mathrm{L}$ be a submatrix of $J_\mathrm{L}$ by
    taking the right $m-d$ columns of $J_\mathrm{LL}$ and the right
    $n-d$ columns of $J_\mathrm{LR}$.
    Then,
    we see that the bottom $m+n-2d$ rows of $\bar{J}_\mathrm{L}$ is
    equal to $N(A,B)$, the Sylvester matrix of $A(x)$ and $B(x)$.  By
    the assumption, polynomials $A(x)$ and $B(x)$ are relatively
    prime, and there exist no nonzero elements in $\bar{J}_\mathrm{L}$
    except for the bottom $m+n-2d$ rows, we have
    $\rank(\bar{J}_\mathrm{L})=m+n-2d$.

    By the above structure of $\bar{J}_\mathrm{L}$ and the lower
    triangular structure of $J_\mathrm{LL}$ and $J_\mathrm{LR}$, we can
    take the left $d+1$ columns of $J_\mathrm{LL}$ or $J_\mathrm{LR}$
    satisfying linear independence along with the $m+n-2d$ columns in
    $\bar{J}_\mathrm{L}$.  Therefore, these $m+n-d+1$ columns generate
    a $(m+n-d+1)$-dimensional subspace in $\R^{m+n-d+2}$ satisfying
    \begin{equation}
      \label{eq:proposition-subspace}
      \{
      {}^t(x_1,\ldots,x_{m+n-d+2})\in\R^{m+n-d+2}\mid x_1=0
      \},
    \end{equation}
    and we see that none of the columns in $J_\mathrm{L}$ have nonzero
    element in the top coordinate.  This proves the lemma.
  \end{proof}

  \textit{Proof of Proposition~\ref{prop:fullrank} (in the real
    coefficient case, continued).}
  By the assumptions, we have at least one column vector in
  $J_\mathrm{R}$ with nonzero coordinate on the top row.  By adding
  such a column vector to the basis of the subspace
  \eqref{eq:proposition-subspace} that are generated as in
  Lemma~\ref{lem:welldefined}, we have a basis of $\R^{m+n-d+2}$.
  This implies $\rank(J_{\bm{q}}(\bm{x}))=m+n-d+2$, which proves the
  proposition in the real coefficient case.

  \subsubsection{The Complex Coefficient Case}

  Let $\bm{x}^*= ( \tilde{f}_{m,1},\ldots,\tilde{f}_{0,1},
  \tilde{g}_{n,1},\ldots,\tilde{g}_{0,1},
  \tilde{f}_{m,2},\ldots,\tilde{f}_{0,2},
  \tilde{g}_{n,2},\ldots,\tilde{g}_{0,2}, a_{n-d,1},\ldots,a_{0,1},$
  \linebreak
  $b_{m-d,1},\ldots,b_{0,1}, a_{n-d,2},\ldots,a_{0,2},
  b_{m-d,2},\ldots,b_{0,2} ) $ with its polynomial representation
  expressed as in \eqref{eq:fgab-complex} (note that this assumption
  permits the polynomials $\tilde{F}(x)$ and $\tilde{G}(x)$ to be
  relatively prime in general).  To verify our claim, we show that we
  have $\rank(J_{\bm{q}}(\bm{x}^*))=2(m+n-d+1)+1$ as in
  \eqref{eq:rankj}, with $J_{\bm{q}}(\bm{x}^*)$ as in
  \eqref{eq:jacobian-complex}.  Let us express
  $J_{\bm{q}}(\bm{x}^*)=
  \begin{pmatrix}
    J_\mathrm{L} \mid J_\mathrm{R}
  \end{pmatrix}
  $, where $J_\mathrm{L}$ and $J_\mathrm{R}$ are column blocks
  expressed as
  \[
  J_\mathrm{L} =
  \begin{pmatrix}
    \bm{0} & \bm{0} \\
    A_1 & -A_2 \\
    A_2 & A_1 
  \end{pmatrix}
  ,
  \quad
  J_\mathrm{R} =
  \begin{pmatrix}
    2\cdot{}^t\bm{v}_1 & 2\cdot{}^t\bm{v}_2 \\
    N_1 & -N_2 \\
    N_2 & N_1
  \end{pmatrix}
  ,
  \]
  respectively.  Then, we have the following lemma.
  \begin{lem}
    \label{lem:fullrank}
    We have $\rank(J_\mathrm{L})=2(m+n-d+1)$.
  \end{lem}
  \begin{proof}
    For $A_1=[C_m(A)_1\; C_n(B)_1]$, let $\overline{C_m(A)_1}$ be the
    right $m-d$ columns of $C_m(A)_1$ and $\overline{C_n(B)_1}$ be the
    right $n-d$ columns of $C_n(B)_1$.  Then, we see that the bottom
    $m+n-2d$ rows of the matrix $\bar{C}=[\overline{C_m(A)_1}\;
    \overline{C_n(B)_1}]$ is equal to the matrix consisting of the
    real part of the elements of $N(A,B)$, the Sylvester matrix of
    $A(x)$ and $B(x)$.  By the assumption, polynomials $A(x)$ and
    $B(x)$ are relatively prime, and there exist no nonzero elements
    in $\bar{C}$ except for the bottom $m+n-2d$ rows, thus we have
    $\rank(\bar{C})=m+n-2d$.

    By the structure of $\bar{C}$ and the lower triangular structure
    of $C_m(A)_1$ and $C_n(B)_1$, we can take the left $d+1$ columns
    of $C_m(A)_1$ or $C_n(B)_1$ satisfying linear independence along
    with $\bar{C}$, which implies that there exist a nonsingular
    square matrix $T$ of order $m+n+2$ satisfying
    \begin{equation}
      \label{eq:tr1}
      A_1 T=R,
    \end{equation}
    where $R$ is a lower triangular matrix, thus we have
    $\rank(A_1)=\rank(R)=m+n-d+1$.

    Furthermore, by using $T$ and $R$ in \eqref{eq:tr1}, we have
    \begin{equation}
      \label{eq:tr2}
      \begin{pmatrix}
        \bm{0} & \bm{0} \\
        A_1 & -A_2 \\
        A_2 & A_1
      \end{pmatrix}
      \begin{pmatrix}
        T & \bm{0} \\
        \bm{0} & T
      \end{pmatrix}
      =
      \begin{pmatrix}
        \bm{0} & \bm{0} \\
        R & -A_2T \\
        A_2T & R
      \end{pmatrix}
      ,
    \end{equation}
    followed by a suitable transformation on columns on the matrix in
    the right-hand-side of \eqref{eq:tr2}, we can make $A_2T$ to zero
    matrix, which implies that
    \[
    \rank(J_\mathrm{L})=\rank
    \left(
      \begin{pmatrix}
        \bm{0} & \bm{0} \\
        R & -A_2T \\
        A_2T & R
      \end{pmatrix}
    \right)
    =2\cdot\rank(R)=2(m+n-d+1).
    \]
    This proves the lemma.
  \end{proof}

  \textit{Proof of Proposition~\ref{prop:fullrank} (in the complex
    coefficient case, continued).}  By the assumptions, we have at
  least one nonzero coordinate in the top row in $J_\mathrm{R}$, while
  we have no nonzero coordinate in the top row in $J_\mathrm{L}$, thus
  we have $\rank(J_{\bm{q}}(\bm{x}))=2(m+n-d+1)+1$, which proves the
  proposition in the complex coefficient case.
\end{proof}

\begin{rem}
  \label{rem:jacobian}
Proposition~\ref{prop:fullrank} says that, so long as the search
direction in the minimization problem satisfies that corresponding
polynomials have a GCD of degree not exceeding $d$, then
$J_{\bm{q}}(\bm{x})$ has full rank, thus we can safely calculate the
next search direction for approximate GCD.  On the other hand, it is still
not clear when $J_{\bm{q}}(\bm{x})$ becomes singular in our
minimization problem.  Although our experiments have shown that the
iteration converges for any $d$ satisfying $0<d\le n$ in many
examples, its theoretical property deserves further investigation (see
also concluding remarks (Section~\ref{sec:remark})).
\end{rem}

\subsection{Setting the Initial Values}
\label{sec:init}

At the beginning of iterations, we give the initial value $\bm{x}_0$
by using the singular value decomposition (SVD) (\cite{dem1997}), as
follows.

\subsubsection{The Real Coefficient Case}

In the case of the real coefficients, we calculate the SVD of the
$(d-1)$-th subresultant matrix $N_{d-1}(F,G):
\R^{m+n-2d+2}\rightarrow\R^{m+n-d+1}$ (see \eqref{eq:subresmat}).  Let
$N_{d-1}(F,G) = U\,\Sigma\,{}^tV$ be the SVD of $N_{d-1}(F,G)$, where
\begin{equation}
  \label{eq:svd-nd-1}
  \begin{array}{c}
    N_{d-1}(F,G) =  U\,\Sigma\,{}^tV,
    \quad
    U = (\bm{u}_1,\ldots,\bm{u}_{m+n-2d+2}),
    \\
    \noalign{\vskip2pt}
    \Sigma = \diag(\sigma_1,\ldots,\sigma_{m+n-2d+2}),
    \quad
    V = (\bm{v}_1,\ldots,\bm{v}_{m+n-2d+2}),
  \end{array}
\end{equation}
with $\bm{u}_j\in\R^{m+n-d+1}$, $\bm{v}_j\in\R^{m+n-2d+2}$, and
$\Sigma=\diag(\sigma_1,\ldots,$ $\sigma_{m+n-2d+2})$ denotes the diagonal
matrix whose the $j$-th diagonal element is $\sigma_j$.  Note that $U$
and $V$ are orthogonal matrices.  Then, by a property of the SVD
(\cite[Theorem~3.3]{dem1997}), the smallest singular value
$\sigma_{m+n-2d+2}$ gives the minimum distance of the image of the unit
sphere
$\textrm{S}^{(m+n-2d+2)-1}$, given as
\[
\textrm{S}^{(m+n-2d+2)-1}=
\{
\bm{x}\in\R^{m+n-2d+2} \mid \|\bm{x}\|_2=1
\}
,
\]
by $N_{d-1}$, represented as
\[
N_{d-1}\cdot\textrm{S}^{(m+n-2d+2)-1}=
\{
N_{d-1}\bm{x}\mid \bm{x}\in\R^{m+n-2d+2}, \|\bm{x}\|_2=1
\},
\]
from the origin, along with $\sigma_{m+n-2d+2}\bm{u}_{m+n-2d+2}$ as its
coordinates.  By \eqref{eq:svd-nd-1}, we have
\[
N_{d-1}\cdot \bm{v}_{m+n-2d+2} = \sigma_{m+n-2d+2}\bm{u}_{m+n-2d+2},
\]
thus  $\bm{v}_{m+n-2d+2}$ represents the coefficients
of $A(x)$ and $B(x)$: let
\[
\begin{split}
  \bm{v}_{m+n-2d+2} &=
  {}^t(\bar{a}_{n-d},\ldots,\bar{a}_0,\bar{b}_{m-d},\ldots,\bar{b}_0),
  \\
  \bar{A}(x) &= \bar{a}_{n-d}x^{n-d}+\cdots+\bar{a}_0x^0, \\
  \bar{B}(x) &= \bar{b}_{m-d}x^{m-d}+\cdots+\bar{b}_0x^0.
\end{split}
\]
Then, $\bar{A}(x)$ and $\bar{B}(x)$ give the least norm of $AF+BG$
satisfying $\|A\|_2^2+\|B\|_2^2=1$ by putting $A(x)=\bar{A}(x)$ and
$B(x)=\bar{B}(x)$. 

Therefore, we admit the coefficients of $F$, $G$, $\bar{A}$ and
$\bar{B}$ as the initial values of the iterations as
\begin{equation}
  \label{eq:appgcd-init}
  \bm{x}_0 = 
  (f_m,\ldots,f_0,g_n,\ldots,g_0,
  \bar{a}_{n-d},\ldots,\bar{a}_0,\bar{b}_{m-d},\ldots,\bar{b}_0).
\end{equation}

\subsubsection{The Complex Coefficient Case}

In the complex case, we calculate the
SVD of 
$
N=
\begin{pmatrix}
  N_1 & -N_2 \\
  N_2 & N_1
\end{pmatrix}
$
in \eqref{eq:abcond2-complex-3} as
\begin{equation}
  \label{eq:svd-nd-1-complex}
  \begin{array}{c}
    N =  U\,\Sigma\,{}^tV,
    \quad
    U = (\bm{u}_1,\ldots,\bm{u}_{2(m+n-2d+2)}),
    \\
    \noalign{\vskip2pt}
    \Sigma = \diag(\sigma_1,\ldots,\sigma_{2(m+n-2d+2)}),
    \quad
    V = (\bm{v}_1,\ldots,\bm{v}_{2(m+n-2d+2)}),
  \end{array}
\end{equation}
where $\bm{u}_j\in\R^{2(m+n-d+1)}$, $\bm{v}_j\in\R^{2(m+n-2d+2)}$, and
$U$ and $V$ are orthogonal matrices.  Then, as in the case of the real
coefficients, the smallest singular value $\sigma_{2(m+n-2d+2)}$ gives
the minimum distance of the image of the unit sphere\linebreak
$\textrm{S}^{2(m+n-2d+2)-1}$, given as
\[
\textrm{S}^{2(m+n-2d+2)-1}=
\{
\bm{x}\in\R^{2(m+n-2d+2)} \mid \|\bm{x}\|_2=1
\}
,
\]
by $N$, represented as
\[
N\cdot\textrm{S}^{2(m+n-2d+2)-1}=
\{
N\bm{x}\mid \bm{x}\in\R^{2(m+n-2d+2)}, \|\bm{x}\|_2=1
\},
\]
from the origin, along with $\sigma_{2(m+n-2d+2)}\bm{u}_{2(m+n-2d+2)}$
as its coordinates.  By \eqref{eq:svd-nd-1-complex}, we have
\[
N\cdot \bm{v}_{2(m+n-2d+2)} = \sigma_{2(m+n-2d+2)}\bm{u}_{2(m+n-2d+2)},
\]
thus  $\bm{v}_{2(m+n-2d+2)}$ represents the coefficients
of $A(x)$ and $B(x)$: let
\[
\begin{split}
  \bm{v}_{2(m+n-2d+2)} &=
  {}^t(
  \bar{a}_{n-d,1},\ldots,\bar{a}_{0,1},
  \bar{b}_{m-d,1},\ldots,\bar{b}_{0,1},
  \bar{a}_{n-d,2},\ldots,\bar{a}_{0,2},
  \bar{b}_{m-d,2},\ldots,\bar{b}_{0,2}
  ),
  \\
  \bar{A}(x) &= (\bar{a}_{n-d,1}+\bar{a}_{n-d,2}\imu)x^{n-d}
  +\cdots+
  (\bar{a}_{0,1}+\bar{a}_{0,2}\imu) x^0, \\
  \bar{B}(x) &= (\bar{b}_{m-d,1}+\bar{b}_{m-d,2}\imu)x^{m-d}
  +\cdots+
  (\bar{b}_{0,1}+\bar{b}_{0,2}\imu) x^0.
\end{split}
\]
Then, $\bar{A}(x)$ and $\bar{B}(x)$ give the least norm of $AF+BG$
satisfying $\|A\|_2^2+\|B\|_2^2=1$ by putting $A(x)=\bar{A}(x)$ and
$B(x)=\bar{B}(x)$ in \eqref{eq:fgab-complex}. 

Therefore, we admit the coefficients of $F$, $G$, $\bar{A}$ and
$\bar{B}$ as the initial values of the iterations as
\begin{multline}
  \label{eq:appgcd-init-complex}
  \bm{x}_0 = 
  (
  f_{m,1},\ldots,f_{0,1},
  g_{n,1},\ldots,g_{0,1},
  f_{m,2},\ldots,f_{0,2},
  g_{n,2},\ldots,g_{0,2},
  \\
  \bar{a}_{n-d,1},\ldots,\bar{a}_{0,1},
  \bar{b}_{m-d,1},\ldots,\bar{b}_{0,1},
  \bar{a}_{n-d,2},\ldots,\bar{a}_{0,2},
  \bar{b}_{m-d,2},\ldots,\bar{b}_{0,2}
  ).
\end{multline}

\subsection{Regarding the Minimization Problem as the Minimum Distance
  (Least Squares) Problem} 
\label{sec:mindistance}

Since we have the object function $f$ as in \eqref{eq:objective1} or
\eqref{eq:objective-complex2} in the case of the real or the complex
coefficients, respectively, we have $\nabla
f(\bm{x})=2\bm{v}_{\text{R}}$, where
\begin{equation}
  \label{eq:vr}
  \bm{v}_{\text{R}}=
  {}^t(
  x_1-f_m,\ldots,x_{m+1}-f_0,
  x_{m+2}-g_n,\ldots,x_{m+n+2}-g_0,
  0,\ldots,0),
\end{equation}
in the case of the real coefficients, or $\nabla
f(\bm{x})=2\bm{v}_{\text{C}}$, where
\begin{multline}
  \label{eq:vc}
  \bm{v}_{\text{C}}=
  {}^t(
  x_1-f_{m,1},\ldots,x_{m+1}-f_{0,1},
  x_{m+2}-g_{n,1},\ldots,x_{m+n+2}-g_{0,1},
  \\
  x_{m+n+3}-f_{m,2},\ldots,x_{2m+n+3}-f_{0,2},
  \\
  x_{2m+n+4}-g_{n,2},\ldots,x_{2(m+n+2)}-g_{0,2},
  0,\ldots,0),
\end{multline}
in the case of the complex coefficients, respectively.  However, we
can regard our problem as finding a point $\bm{x}\in V_{\bm{q}}$ which
has the minimum distance to the initial point $\bm{x}_0$ with respect
to the $(x_1,\ldots,x_{m+n+2})$-coordinates in the case of the real
coefficients or the $(x_1,\ldots,x_{2(m+n+2)})$-coordinates in the
case of the complex coefficients, respectively, which correspond to the
coefficients in $F(x)$ and $G(x)$.  Therefore, in the gradient
projection method at $\bm{x}\in V_{\bm{q}}$, the projection of
$-\nabla f(\bm{x})$ in \eqref{eq:projection} should be the projection
of
$\bm{v}_{\text{R}}$
in the case of the real coefficients, or
$\bm{v}_{\text{C}}$
in the case of the complex coefficients, respectively, onto
$T_{\bm{x}}$, where $\bm{v}_{\text{R}}$ and $\bm{v}_{\text{C}}$ are as
in \eqref{eq:vr} and \eqref{eq:vc}, respectively. These changes are
equivalent to changing the objective function as
$\bar{f}(\bm{x})=\frac{1}{2}f(\bm{x})$ then solving the minimization
problem of $\bar{f}(\bm{x})$, subject to $\bm{q}(\bm{x})=\bm{0}$.

\subsection{Calculating the Actual GCD and Correcting the Deformed
  Polynomials}
\label{sec:correct}

After successful end of the iterations in Algorithms~\ref{alg:gp} or
\ref{alg:gpnewton}, we obtain the coefficients of $\tilde{F}(x)$,
$\tilde{G}(x)$, $A(x)$ and $B(x)$ satisfying \eqref{eq:abcond} with
$A(x)$ and $B(x)$ are relatively prime.  Then, we need to compute the
actual GCD $H(x)$ of $\tilde{F}(x)$ and $\tilde{G}(x)$.  Although $H$
can be calculated as the quotient of $\tilde{F}$ divided by $B$ or
$\tilde{G}$ divided by $A$, naive polynomial division may cause
numerical errors in the coefficient.  Thus, we calculate the
coefficients of $H$ by the so-called least squares division
(\cite{zen2011}), followed by correcting the coefficients in
$\tilde{F}$ and $\tilde{G}$ by using the calculated $H$, as follows.

\subsubsection{Calculating Candidates for the GCD in the Real
  Coefficient Case} 

For polynomials $\tilde{F}$, $\tilde{G}$, $A$ and $B$
represented as in \eqref{eq:fgab} and $H$ represented as
\[
H(x)=h_dx^d+\cdots+h_0x^0,
\]
solve the equations $HB=\tilde{F}$ and $HA=\tilde{G}$ with respect to
$H$ as solving the least squares problems of linear systems
\begin{align}
  \label{eq:hsystem1}
  C_d(A)\,{}^t(h_d,\ldots,h_0) &=
  {}^t(\tilde{g}_n,\ldots,\tilde{g}_0), \\
  \label{eq:hsystem2}
  C_d(B)\,{}^t(h_d,\ldots,h_0) &=
  {}^t(\tilde{f}_m,\ldots,\tilde{f}_0),
\end{align}
respectively.  Let $H_1(x),H_2(x)\in\R[x]$ be the candidates for the
GCD whose coefficients are calculated as the least squares solutions
of \eqref{eq:hsystem1} and \eqref{eq:hsystem2}, respectively.

\subsubsection{Calculating Candidates for the GCD in the Complex
  Coefficient Case}

For polynomials $\tilde{F}$, $\tilde{G}$, $A$ and $B$
represented as in \eqref{eq:fgab-complex} and $H$ represented as
\[
H(x)=(h_{d,1}+h_{d,2}\imu)x^d+\cdots+(h_{0,1}+h_{0,2}\imu)x^0,
\]
solve the equations $HB=\tilde{F}$ and $HA=\tilde{G}$ with respect to
$H$ as solving the least squares problems of linear systems
\begin{align}
  \label{eq:hsystem1-complex}
  C_d(A)\,{}^t(h_{d,1}+h_{d,2}\imu,\ldots,h_{0,1}+h_{0,2}\imu) &=
  {}^t(\tilde{g}_{n,1}+\tilde{g}_{n,2}\imu,\ldots,
  \tilde{g}_{0,1}+\tilde{g}_{0,2}\imu), \\
  \label{eq:hsystem2-complex}
  C_d(B)\,{}^t(h_{d,1}+h_{d,2}\imu,\ldots,h_{0,1}+h_{0,2}\imu) &=
  {}^t(\tilde{f}_{m,1}+\tilde{f}_{m,2}\imu,\ldots,
  \tilde{f}_{0,1}+\tilde{f}_{0,2}\imu),
\end{align}
respectively.  Then, we transfer the linear systems
\eqref{eq:hsystem1-complex} and \eqref{eq:hsystem2-complex}, as
follows.  For \eqref{eq:hsystem2-complex}, let us express the matrices
and vectors as the sum of the real and the imaginary part of which,
respectively, as
\begin{gather*}
  C_d(B) = B_1+\imu B_2,
  \\
  {}^t(h_{d,1}+h_{d,2}\imu,\ldots,h_{0,1}+h_{0,2}\imu) =
  \bm{h}_1+\imu\bm{h}_2,
  \\
  {}^t(\tilde{f}_{m,1}+\tilde{f}_{m,2}\imu,\ldots,
  \tilde{f}_{0,1}+\tilde{f}_{0,2}\imu) =
  \bm{f}_1+\imu\bm{f}_2.
\end{gather*}
Then, \eqref{eq:hsystem2} is expressed as
\begin{equation}
  \label{eq:hsystem2-2}
  (B_1+\imu B_2)(\bm{h}_1+\imu\bm{h}_2)=(\bm{f}_1+\imu\bm{f}_2).
\end{equation}
By equating the real and the imaginary parts in Eq.\
\eqref{eq:hsystem2-2}, respectively, we have
\[
(B_1\bm{h}_1-B_2\bm{h}_2)=\bm{f}_1,\quad
(B_1\bm{h}_2+B_2\bm{h}_1)=\bm{f}_2,
\]
or
\begin{equation}
  \label{eq:hsystem2-3}
  \begin{pmatrix}
    B_1 & -B_2 \\
    B_2 & B_1
  \end{pmatrix}
  \begin{pmatrix}
    \bm{h}_1 \\ \bm{h}_2
  \end{pmatrix}
  =
  \begin{pmatrix}
    \bm{f}_1 \\ \bm{f}_2
  \end{pmatrix}
  .
\end{equation}
Thus, we can calculate the coefficients of $H(x)$ by solving the real
least squares problem \eqref{eq:hsystem2-3}.  We can solve
\eqref{eq:hsystem1-complex} similarly.  Let $H_1(x),H_2(x)\in\C[x]$ be
the candidates for the GCD whose coefficients are calculated as the
least squares solutions of \eqref{eq:hsystem1-complex} and
\eqref{eq:hsystem2-complex}, respectively.

\subsubsection{Choosing the GCD and Calculating the Deformed Polynomials}

Let $H_1(x),H_2(x)\in\C[x]$ be the candidates for the GCD calculated
as in the above.  Then, for $i=1,2$, calculate the norms of the
residues as
\[
r_i = \|\tilde{F}-H_iB\|_2^2+\|\tilde{G}-H_iA\|_2^2,
\]
respectively, and set the GCD $H(x)$ be $H_i(x)$ giving the minimum
value of $r_i$.

Finally, for the chosen $H(x)$, correct the coefficients of
$\tilde{F}(x)$ and $\tilde{G}(x)$ as
\[
\tilde{F}(x)=H(x)\cdot B(x), \quad
\tilde{G}(x)=H(x)\cdot A(x),
\]
respectively.

\subsection{The Algorithm}

Summarizing the above, the algorithm for calculating approximate GCD
becomes as follows.

\begin{alg}[GPGCD: Approximate GCD by the
    Gradient-Projection Method] 
  \label{alg:gpgcd}
  \mbox{\\}
  \begin{itemize}
  \item Inputs:
    \begin{itemize}
    \item $F(x),G(x)\in\R[x]$ or $\C[x]$ with $\deg(F)\ge\deg(G)>0$,
    \item $d\in\N$: the degree of approximate GCD with $d\le \deg(G)$,
    \item $\varepsilon>0$: a threshold for terminating iteration in
      the gradient-projection method,
    \item $u\in\N$: an upper bound for the number of iterations
      permitted in the gradient-projection method.
    \end{itemize}
  \item Outputs: $\tilde{F}(x),\tilde{G}(x),H(x)\in\R[x]$ or $\C[x]$
    such that $\tilde{F}$ and $\tilde{G}$ are deformations of $F$ and
    $G$, respectively, whose GCD is equal to $H$ with $\deg(H)=d$.
  \end{itemize}
  \begin{description}
  \item[Step 1] \textbf{[Setting the initial values]} As the
    discussions in Section~\ref{sec:init}, set the initial values
    $\bm{x}_0$ as in \eqref{eq:appgcd-init} in the case of the real
    coefficients, or \eqref{eq:appgcd-init-complex} in the case of the
    complex coefficients, respectively.
  \item[Step 2] \textbf{[Iteration]} As the discussions in
    Section~\ref{sec:mindistance}, solve the minimization problem of
    $\bar{f}(\bm{x})=\frac{1}{2}f(\bm{x})$, subject to
    $\bm{q}(\bm{x})=\bm{0}$, with $f(\bm{x})$ and $\bm{q}(\bm{x})$ as
    in \eqref{eq:objective1} and \eqref{eq:constraint2} in the case of
    the real coefficients, or in \eqref{eq:objective-complex2} and
    \eqref{eq:constraint-complex2} in the case of the complex
    coefficients, respectively.  Apply Algorithm~\ref{alg:gp} or
    \ref{alg:gpnewton} for the minimization: repeat iterations until
    the search direction $\bm{d}_k$ (as in \eqref{eq:projection} in
    the gradient-projection method or in \eqref{eq:modnewtonsol} in a
    modified Newton method, respectively) satisfies
    $\|\bm{d}_k\|_2<\varepsilon$, or the number of iteration reaches
    its upper bound $u$.
  \item[Step 3] \textbf{[Construction of $\tilde{F}$, $\tilde{G}$ and
      $H$]} As the discussions in Section~\ref{sec:correct},
    construct the GCD $H(x)$ and correct the coefficients of
    $\tilde{F}(x)$ and $\tilde{G}(x)$.  Then, return $\tilde{F}(x)$,
    $\tilde{G}(x)$ and $H(x)$.  If Step 2 did not end with the number
    of iterations less than $u$, report it to the user.
  \end{description}
\end{alg}

\subsection{Preserving Monicity}

While Algorithm~\ref{alg:gpgcd} permits changing the leading
coefficients for calculating $\tilde{F}(x)$ and $\tilde{G}(x)$, we can
also give an algorithm restricting inputs $F(x)$ and $G(x)$ and
outputs $\tilde{F}(x)$ and $\tilde{G}(x)$ to be monic as follows.

\subsubsection{The Real Coefficient Case}

Let $\tilde{F}(x)$ and $\tilde{G}(x)$ be represented as in
\eqref{eq:fgab} with $\tilde{f}_m=\tilde{g}_n=1$, then, by 
Eq.\ \eqref{eq:abcond2},
we have $b_{m-d}=-a_{n-d}$.  Thus, we eliminate the variables
$\tilde{f}_m$, $\tilde{g}_n$ and $b_{m-d}$, which cause the following
changes. 

\paragraph{Changes on the Subresultant Matrix}
\label{sec:change-subresmat}

By eliminating the variables as in the above, we see that Eq.\
\eqref{eq:abcond2} is equivalent to
\[
N'_{d-1}(\tilde{F},\tilde{G})
\cdot{}^t(a_{n-d},\ldots,a_0,b_{m-d-1},\ldots,b_0)
=\bm{0}, 
\]
where $N'_{d-1}(\tilde{F},\tilde{G})$ is defined as
\begin{equation*}
  N'_{d-1}(\tilde{F},\tilde{G})=
  \setlength{\arraycolsep}{1pt}
  \begin{pmatrix}
    \tilde{f}_{m-1}-\tilde{g}_{n-1} & 1 & & & 1 & &  \\
    \vdots & \tilde{f}_{m-1} & \ddots &  & \tilde{g}_{n-1} & \ddots & \\ 
    \tilde{f}_0-\tilde{g}_{n-m} & \vdots & \ddots & 1 &
    \vdots & \ddots & 1 \\
    & \tilde{f}_0 & & \tilde{f}_{m-1} & \tilde{g}_0 & &
    \tilde{g}_{n-1} \\
    & & \ddots & \vdots & & \ddots & \vdots \\
    & & & \tilde{f}_0 & & & \tilde{g}_0
  \end{pmatrix}
  ,
\end{equation*}
with (in the first column) $\tilde{g}_j=0$ for $j<0$,
by subtracting the first column by the $(n-d+1)$-th column, then
deleting the first row and the $(n-d+1)$-th column (corresponding to
the $b_{m-d}$ term) in $N_{d-1}(\tilde{F},\tilde{G})$.

\paragraph{Changes on the Settings in the Minimization Problem}
\label{sec:minimize-monic}

In solving the minimization problem, we substitute the variables
\begin{equation*}
  (\tilde{f}_{m-1},\ldots,\tilde{f}_0,
  \tilde{g}_{n-1},\ldots,\tilde{g}_0,
  a_{n-d},\ldots,a_0, b_{m-d-1},\ldots,b_0)
\end{equation*}
as $\bm{x}=(x_1,\ldots,x_{2(m+n-d)+1})$, instead of \eqref{eq:var0}.
As a consequence, in contrast to \eqref{eq:objective1}, the objective
function $f(\bm{x})$ becomes as
\begin{multline}
  \label{eq:objective1-monic}
  f(\bm{x})=(x_1-f_{m-1})^2+\cdots+(x_m-f_0)^2
  \\
  +(x_{m+1}-g_{n-1})^2+\cdots+(x_{m+n}-g_0)^2.
\end{multline}
Also, in contrast to \eqref{eq:constraint0} and
\eqref{eq:constraint1}, the constraints 
$\bm{q}(\bm{x})$ become as
\begin{equation}
  \label{eq:constraint0-monic}
  \begin{split}
    q_0 &=
    2a_{n-d}^2+a_{n-d-1}^2\cdots+a_0^2+b_{m-d-1}^2+\cdots+b_0^2-1=0,
    \\
    q_1 &= (\tilde{f}_{m-1}-\tilde{g}_{n-1}) a_{n-d} + a_{n-d-1} +
    b_{m-d-1} = 0,
    \\
    &\quad\vdots
    \\
    q_{m+n-d} &= \tilde{f}_0 a_0 + \tilde{g}_0 b_0 = 0.
  \end{split}
\end{equation}

\paragraph{Changes on the Initial Values}
\label{sec:init-monic}

Let $N'_{d-1}=U\,\Sigma\,{}^tV$ be the SVD of $N'_{d-1}(F,G)$, with
\[
\begin{split}
  V &= (\bm{v}_1,\ldots,\bm{v}_{m+n-2d-1}),\\
  \bm{v}_{m+n-2d-1} &=
  {}^t(\bar{a}_{n-d},\ldots,\bar{a}_0,\bar{b}_{m-d-1},\ldots,\bar{b}_0).
\end{split}
\]
Then, in contrast to \eqref{eq:appgcd-init}, the initial values become
as 
\begin{equation}
  \label{eq:appgcd-init-monic}
  \bm{x}_0 = 
  (f_{m-1},\ldots,f_0,g_{n-1},\ldots,g_0,
  \bar{a}_{n-d},\ldots,\bar{a}_0,\bar{b}_{m-d-1},\ldots,\bar{b}_0).
\end{equation}

\paragraph{The Algorithm}

Summarizing discussions in the above, for preserving $\tilde{F}(x)$
and $\tilde{G}(x)$ to be monic, we modify Algorithm~\ref{alg:gpgcd} as
follows. 
\begin{alg}[GPGCD preserving monicity, with real coefficients]
  \label{alg:gpgcd-monic}
  Change Steps 1 and 2 in Algorithm~\ref{alg:gpgcd} as follows.
  \begin{description}
  \item[Step 1] \textbf{[Setting the initial values]} Set the initial values
    $\bm{x}_0$ as in \eqref{eq:appgcd-init-monic}.
  \item[Step 2] \textbf{[Iteration]} Solve the minimization problem of
    $\bar{f}(\bm{x})=\frac{1}{2}(\bm{x})$, subject to
    $\bm{q}(\bm{x})=\bm{0}$, with $f(\bm{x})$ and $\bm{q}(\bm{x})$
    defined as in \eqref{eq:objective1-monic} and
    \eqref{eq:constraint0-monic}, respectively, as Step 2 in
    Algorithm~\ref{alg:gpgcd}.
  \end{description}
\end{alg}

\subsubsection{The Complex Coefficient Case}

Let $\tilde{F}(x)$ and $\tilde{G}(x)$ be represented as in
\eqref{eq:fgab-complex} with $\tilde{f}_{m,1}=\tilde{g}_{n,1}=1$ and
$\tilde{f}_{m,2}=\tilde{g}_{n,2}=0$, then, by Eq.\ \eqref{eq:abcond2},
we have $b_{m-d,j}=-a_{n-d,j}$ for $j\in\{1,2\}$.  Thus, for
$j\in\{1,2\}$, we eliminate the variables $\tilde{f}_{m,j}$,
$\tilde{g}_{n,j}$ and $b_{m-d,j}$, which cause the following changes.

\paragraph{Changes on the Subresultant Matrix}

By eliminating the variables as in the above, we see that Eq.\
\eqref{eq:abcond2-complex} is equivalent to
\begin{multline}
  \label{eq:abcond2-complex-monic}
  \setlength{\arraycolsep}{1pt}
  \left(
  \begin{array}{cccc}
    (\tilde{f}_{m-1,1}-\tilde{g}_{n-1,1})+(\tilde{f}_{m-1,2}-\tilde{g}_{n-1,2})
    \imu
    & 1 & &  
    \\
    \vdots
    & \tilde{f}_{m-1,1}+\tilde{f}_{m-1,2}\imu & \ddots &  
    \\
    (\tilde{f}_{0,1}-\tilde{g}_{n-m,1})+(\tilde{f}_{0,2}-\tilde{g}_{n-m,2})
    \imu
    &\vdots      & \ddots &      1
    \\
    &\tilde{f}_{0,1}+\tilde{f}_{0,2}\imu & &
    \tilde{f}_{m-1,1}+\tilde{f}_{m-1,2}\imu 
    \\
    & & \ddots & \vdots 
    \\
    & & & \tilde{f}_{0,1}+\tilde{f}_{0,2}\imu
  \end{array}
  \right.
  \\
  \left.
  \begin{array}{ccc}
    1 & &  \\
    \tilde{g}_{n-1,1}+\tilde{g}_{n-1,2}\imu & \ddots &  \\
    \vdots      & \ddots &  1\\
    \tilde{g}_{0,1}+\tilde{g}_{0,2}\imu & &
    \tilde{g}_{n-1,1}+\tilde{g}_{n-1,2}\imu \\
    & \ddots      & \vdots \\
    & &
    \tilde{g}_{0,1}+\tilde{g}_{0,2}\imu 
  \end{array}
  \right)
  \begin{pmatrix}
    a_{n-d,1}+a_{n-d,2}\imu
    \\
    \vdots
    \\
    a_{0,1}+a_{0,2}\imu
    \\
    b_{m-d-1,1}+b_{m-d-1,2}\imu
    \\
    \vdots
    \\
    b_{0,1}+b_{0,2}\imu
  \end{pmatrix}
  \\
  =
  \bm{0}
  ,
\end{multline}
with (in the first column of the matrix in the left-hand-side)
$\tilde{g}_{i,j}=0$ for $i<0$ and $j\in\{1,2\}$, in which the matrix
in the left-hand-side is obtained by subtracting the first column by
the $(n-d+1)$-th column, then deleting the first row and the
$(n-d+1)$-th column (corresponding to the $b_{m-d,1}+b_{m-d,2}\imu$
term) in the corresponding matrix in \eqref{eq:abcond2-complex}.
Then, Eq.\ \eqref{eq:abcond2-complex-2} becomes as
\[
(N'_1+N'_2\imu)(\bm{v}'_1+\bm{v}'_2\imu)=\bm{0},
\]
with
\begin{equation}
  \label{eq:n1n2v1v2-monic}
  \begin{split}
  \setlength{\arraycolsep}{1pt}
  N'_1 &=
  \setlength{\arraycolsep}{1pt}
  \begin{pmatrix}
    \tilde{f}_{m-1,1}-\tilde{g}_{n-1,1} & 1 & & & 1 & &  \\
    \vdots & \tilde{f}_{m-1,1} & \ddots &  & \tilde{g}_{n-1,1} & \ddots & \\ 
    \tilde{f}_{0,1}-\tilde{g}_{n-m,1} & \vdots & \ddots & 1 &
    \vdots & \ddots & 1 \\
    & \tilde{f}_{0,1} & & \tilde{f}_{m-1,1} & \tilde{g}_{0,1} & &
    \tilde{g}_{n-1,1} \\
    & & \ddots & \vdots & & \ddots & \vdots \\
    & & & \tilde{f}_{0,1} & & & \tilde{g}_{0,1}
  \end{pmatrix}
  ,
  \\
  N'_2 &=
  \setlength{\arraycolsep}{1pt}
  \begin{pmatrix}
    \tilde{f}_{m-1,2}-\tilde{g}_{n-1,2} & 1 & & & 1 & &  \\
    \vdots & \tilde{f}_{m-1,2} & \ddots &  & \tilde{g}_{n-1,2} & \ddots & \\ 
    \tilde{f}_{0,2}-\tilde{g}_{n-m,2} & \vdots & \ddots & 1 &
    \vdots & \ddots & 1 \\
    & \tilde{f}_{0,2} & & \tilde{f}_{m-1,2} & \tilde{g}_{0,2} & &
    \tilde{g}_{n-1,2} \\
    & & \ddots & \vdots & & \ddots & \vdots \\
    & & & \tilde{f}_{0,2} & & & \tilde{g}_{0,2}
  \end{pmatrix}
  ,
  \\
  \bm{v}'_1 &= {}^t(a_{n-d,1},\ldots,a_{0,1},b_{m-d-1,1},\ldots,b_{0,1}),
  \\
  \bm{v}'_2 &=
  {}^t(a_{n-d,2},\ldots,a_{0,2},b_{m-d-1,2},\ldots,b_{0,2}).
  \end{split}
\end{equation}

\paragraph{Changes on the Settings in the Minimization Problem}

In solving the minimization problem, we substitute the variables
\begin{multline*}
  (
  \tilde{f}_{m-1,1},\ldots,\tilde{f}_{0,1},
  \tilde{g}_{n-1,1},\ldots,\tilde{g}_{0,1},
  \tilde{f}_{m-1,2},\ldots,\tilde{f}_{0,2},
  \tilde{g}_{n-1,2},\ldots,\tilde{g}_{0,2},
  \\
  a_{n-d,1},\ldots,a_{0,1},
  b_{m-d-1,1},\ldots,b_{0,1},
  a_{n-d,2},\ldots,a_{0,2},
  b_{m-d-1,2},\ldots,b_{0,2}
  )
\end{multline*}
as $\bm{x}=(x_1,\ldots,x_{4(m+n-d+1)})$, instead of
\eqref{eq:var0-complex}.  As a consequence, in contrast to
\eqref{eq:objective-complex2}, the objective function $f(\bm{x})$
becomes as
\begin{equation}
  \begin{split}
  f(\bm{x})
  =
  &
  (x_1-f_{m-1,1})^2+\cdots+(x_{m}-f_{0,1})^2
  \\
  &
  +
  (x_{m+1}-g_{n-1,1})^2+\cdots+(x_{m+n}-g_{0,1})^2
  \\
  &
  +
  (x_{m+n+1}-f_{m-1,2})^2+\cdots+(x_{2m+n}-f_{0,2})^2
  \\
  &
  +
  (x_{2m+n+1}-g_{n-1,2})^2+\cdots+(x_{2(m+n)}-g_{0,2})^2.
  \label{eq:objective-complex2-monic}
  \\
  \end{split}
\end{equation}
The constraints becomes as follows.  Now, Eq.\
\eqref{eq:abcond2-complex-3} becomes as 
\begin{equation}
  \label{eq:abcond2-complex-3-monic}
  \begin{pmatrix}
    N'_1 & -N'_2 \\
    N'_2 & N'_1
  \end{pmatrix}
  \begin{pmatrix}
    \bm{v}'_1 \\ \bm{v}'_2
  \end{pmatrix}
  =
  \bm{0},
\end{equation}
with $N'_1$, $N'_2$, $\bm{v}'_1$ and $\bm{v}'_2$ are defined as in 
\eqref{eq:n1n2v1v2-monic}.  Furthermore, the constraint for the
coefficients in $A(x)$ and $B(x)$ as in
\eqref{eq:abconstraint-complex} now becomes as
\begin{multline}
  \label{eq:abconstraint-complex-monic}
  \|A(x)\|_2^2+\|B(x)\|_2^2
  =(2a_{n-d,1}^2+\cdots+a_{0,1}^2)+(b_{m-d-1,1}^2+\cdots+b_{0,1}^2)
  \\
  +(2a_{n-d,2}^2+\cdots+a_{0,2}^2)+(b_{m-d-1,2}^2+\cdots+b_{0,2}^2)-1=0.
\end{multline}
Then, by the same way we have constructed
\eqref{eq:abcond2-complex-4}, we put
\eqref{eq:abcond2-complex-3-monic} and
\eqref{eq:abconstraint-complex-monic} together as
\begin{equation}
  \label{eq:abcond2-complex-4-monic}
  \begin{pmatrix}
    {}^t\bm{v}'_1 & {}^t\bm{v}'_2 & a_{n-d,1}^2+a_{n-d,2}^2-1 \\
    N'_1 & -N'_2 & \bm{0} \\
    N'_2 & N'_1 & \bm{0}
  \end{pmatrix}
  \begin{pmatrix}
    \bm{v}'_1 \\ \bm{v}'_2 \\ 1
  \end{pmatrix}
  =
  \bm{0},
\end{equation}
and we obtain the constraint $\bm{q}(\bm{x})=\bm{0}$ as
\begin{equation}
  \bm{q}(\bm{x})
  =
  {}^t(q_1(\bm{x}), \ldots, q_{2(m+n-d)+1}(\bm{x}))
  =
  \bm{0},
  \label{eq:constraint-complex2-monic}
\end{equation}
where $q_j(\bm{x})$ corresponds to the $j$-th row of matrix-vector
product in \eqref{eq:abcond2-complex-4-monic}.

\paragraph{Changes on the Initial Values}

Let $N'=U\,\Sigma\,{}^tV$ be the SVD of 
$
N'=
\begin{pmatrix}
  N'_1 & -N'_2 \\
  N'_2 & N'_1
\end{pmatrix}
$, with
\[
\begin{split}
  V &= (\bm{v}_1,\ldots,\bm{v}_{2(m+n-2d+1)}),\\
  \bm{v}_{2(m+n-2d+1)} &=
  {}^t(
  \bar{a}_{n-d,1},\ldots,\bar{a}_{0,1},
  \bar{b}_{m-d-1,1},\ldots,\bar{b}_{0,1},
  \\
  &\qquad
  \bar{a}_{n-d,2},\ldots,\bar{a}_{0,2},
  \bar{b}_{m-d-1,2},\ldots,\bar{b}_{0,2}
  ).
\end{split}
\]
Then, in contrast to \eqref{eq:appgcd-init-complex}, the initial value
becomes as
\begin{multline}
  \label{eq:appgcd-init-complex-monic}
  \bm{x}_0 = 
  (
  f_{m-1,1},\ldots,f_{0,1},
  g_{n-1,1},\ldots,g_{0,1},
  f_{m-1,2},\ldots,f_{0,2},
  g_{n-1,2},\ldots,g_{0,2},
  \\
  \bar{a}_{n-d,1},\ldots,\bar{a}_{0,1},
  \bar{b}_{m-d-1,1},\ldots,\bar{b}_{0,1},
  \bar{a}_{n-d,2},\ldots,\bar{a}_{0,2},
  \bar{b}_{m-d-1,2},\ldots,\bar{b}_{0,2}
  ).
\end{multline}

\paragraph{The Algorithm}
Summarizing discussions in the above, for preserving $\tilde{F}(x)$
and $\tilde{G}(x)$ to be monic, we modify Algorithm~\ref{alg:gpgcd} as
follows. 
\begin{alg}[GPGCD preserving monicity, with complex coefficients]
  \label{alg:gpgcd-complex-monic}
  Change Steps 1 and 2 in Algorithm~\ref{alg:gpgcd} as follows.
  \begin{description}
  \item[Step 1] \textbf{[Setting the initial values]} Set the initial values
    $\bm{x}_0$ as in \eqref{eq:appgcd-init-complex-monic}.
  \item[Step 2] \textbf{[Iteration]} Solve the minimization problem of
    $\bar{f}(\bm{x})=\frac{1}{2}(\bm{x})$, subject to
    $\bm{q}(\bm{x})=\bm{0}$, with $f(\bm{x})$ and $\bm{q}(\bm{x})$
    defined as in \eqref{eq:objective-complex2-monic} and
    \eqref{eq:constraint-complex2-monic}, respectively, as Step 2 in
    Algorithm~\ref{alg:gpgcd}.
  \end{description}
\end{alg}

\subsubsection{Running Time Analysis}

We give an analysis for running time of Algorithm~\ref{alg:gpgcd} with
employing the modified Newton method.

In Step 1, we set the initial values by the SVD. Since the dimension
of subresultant matrix is $O(m+n-d)$, running time in this step
becomes $O((m+n-d)^3)$.

In Step 2, we estimate running time just for one iteration, since the
number of iterations for convergence of solution may vary depending on
the given problem. This step essentially depends on solving the linear
system \eqref{eq:modnewton} with the Jacobian matrix $J_q(\bm{x}_k)$
defined as in \eqref{eq:jacobian-real} for the real coefficient case
or in \eqref{eq:jacobian-complex} for the complex coefficient case.
In both cases, dimension of $J_q(\bm{x}_k)$ is $O(m+n-d)$, 
thus we can estimate running time for solving the linear system
\eqref{eq:modnewton} as $O((m+n-d)^3)$.

Step 3 depends on calculating the least square solution of the linear
system as in Section~\ref{sec:correct} whose running time becomes as
$O((m+n-d)^3)$.

As a consequence, we can estimate running time of
Algorithm~\ref{alg:gpgcd} as $O((m+n-d)^3)$ times the number of
iterations for finding a GCD.

\subsection{Examples}

Now we show examples of Algorithm~\ref{alg:gpgcd} in the case of the
real coefficients (more comprehensive experiments are presented in the
next section).

Note that, for the minimization method, we have employed a modified
Newton method (Algorithm~\ref{alg:gpnewton}).  Computations in Example
\ref{ex:kar-lak1998} have been executed on a computer algebra system
Mathematica 6 with hardware floating-point arithmetic, while those in
Examples \ref{ex:small-lc} and \ref{ex:large-lc} have been executed on
another computer algebra system Maple 15 with \verb|Digits=10|.

\begin{exmp}
  \label{ex:kar-lak1998}
  This example is given by
  \citet{kar-lak1998},
  followed by
  \citet{kal-yan-zhi2007}.  Let
  $F(x),G(x)\in\R[x]$ be
  \[
  \begin{split}
    F(x) &= x^2 - 6 x + 5 = (x-1)(x-5), \\
    G(x) &= x^2 - 6.3 x + 5.72 = (x-1.1)(x-5.2),
  \end{split}
  \]
  and find $\tilde{F}(x),\tilde{G}(x)\in\R[x]$ which have the GCD of
  degree $1$, namely $\tilde{F}(x)$ and $\tilde{G}(x)$ have one common
  zero. 

  \textbf{Case 1}: The leading coefficient can be perturbed.  Applying
  Algorithm~\ref{alg:gpgcd} to $F$ and $G$, with $d=1$ and
  $\varepsilon=1.0\times 10^{-8}$, after $7$ iterations, we obtain the
  polynomials $\tilde{F}$ and $\tilde{G}$ as
  \[
  \begin{split}
    \tilde{F}(x) &= 0.985006 x^2 - 6.00294 x + 4.99942, \\ 
    \tilde{G}(x) &= 1.01495 x^2 - 6.29707 x + 5.72058,
  \end{split}
  \]
  with perturbations as
  $\sqrt{\|\tilde{F}-F\|_2^2+\|\tilde{G}-G\|_2^2}=0.0215941$
  and the
  common zero of $\tilde{F}(x)$ and $\tilde{G}(x)$ as
  $x=5.09890419203$.  In
  \citet{kal-yan-zhi2007}, the calculated perturbations obtained is
  $\sqrt{0.0004663}=0.021594$
  with the common zero as $x=5.09890429$.
  \citet{kar-lak1998} only give an example without
  perturbations on the leading coefficients.

  \textbf{Case 2}: The leading coefficient cannot be perturbed.
  Applying Algorithm~\ref{alg:gpgcd} (preserving monicity) with the
  same arguments as in Case 1, after $7$ iterations, we obtain the
  polynomials $\tilde{F}$ and $\tilde{G}$ as
  \[
  \begin{split}
    \tilde{F}(x) &= x^2 - 6.07504 x + 4.98528, \\ 
    \tilde{G}(x) &= x^2 - 6.22218 x + 5.73527,
  \end{split}
  \]
  with perturbations as
  $\sqrt{\|\tilde{F}-F\|_2^2+\|\tilde{G}-G\|_2^2}=0.110164$ And the
  common zero of $\tilde{F}(x)$ and $\tilde{G}(x)$ as
  $x=5.0969464650$.  In
  \citet{kal-yan-zhi2007}, the calculated perturbations obtained is
  $\sqrt{0.01213604583}=0.110164$
  with the common zero as $x=5.0969478$.  In
  \citet{kar-lak1998}, the calculated perturbations
  obtained is
  $\sqrt{0.01213605293}=0.110164$
  with the common zero as $x=5.096939087$.
\end{exmp}

The next examples, originally by
\citet{san-sas2007},
are ill-conditioned ones with the small or large leading coefficient
GCD.
\begin{exmp}[A small leading coefficient problem
  {\cite[Example~4]{san-sas2007}}]
  \label{ex:small-lc}
  Let $F(x)$ and $G(x)$ be
  \[
  \begin{split}
    F(x) &= (x^4 + x^2 + x + 1)(0.001x^2+x+1), \\
    G(x) &= (x^3 + x^2 + x + 1)(0.001x^2+x+1).
  \end{split}
  \]
  Applying Algorithm~\ref{alg:gpgcd} to $F$ and $G$, with $d=2$ and
  $\varepsilon=1.0\times10^{-8}$, after $1$ iteration, we obtain the
  polynomials $\tilde{F}$, $\tilde{G}$ and $H$ as
 \[
 \begin{split}
   \tilde{F}(x) &\simeq F(x), \quad \tilde{G}(x)\simeq G(x),\\
   H(x) &= 0.001x^2 + 0.9999999936 x + 0.9999999936,
 \end{split}
 \]
  with
  $\sqrt{\|\tilde{F}-F\|_2^2+\|\tilde{G}-G\|_2^2}=8.485281374\times
  10^{-12}$. 
\end{exmp}
\begin{exmp}[A large leading coefficient problem
  {\cite[Example~5]{san-sas2007}}] 
  \label{ex:large-lc}
  Let $F(x)$ and $G(x)$ be
  \[
  \begin{split}
    F(x) &= (x^6 - 0.00001 (0.8 x^5 + 3 x^4 - 4 x^3 - 4 x^2 - 5 x +
    1)) \cdot C(x), \\
    G(x) &= (x^5+x^4+x^3-0.1x^2+1)\cdot C(x),
  \end{split}
  \]
  with $C(x)=x^2+0.001$.  Applying Algorithm~\ref{alg:gpgcd} to $F$
  and $G$, with $d=2$ and $\varepsilon=1.0\times 10^{-8}$, after $1$
  iteration, we obtain the polynomials $\tilde{F}$, $\tilde{G}$ and
  $H$ as
 \[
 \begin{split}
   \tilde{F}(x) &\simeq F(x), \quad \tilde{G}(x)\simeq G(x),\\
   H(x) &= x^2 + 1.548794164\times 10^{-16} x + 0.001,
 \end{split}
 \]
  with
  $\sqrt{\|\tilde{F}-F\|_2^2+\|\tilde{G}-G\|_2^2}=1.735004369\times
  10^{-14}$.  
\end{exmp}

\section{Experiments}
\label{sec:exp}

We have implemented our GPGCD method (Algorithm~\ref{alg:gpgcd}) on a
computer algebra system Maple\footnote{The implementation is available
  at Project Hosting on Google Code \cite{gpgcd0.2}.} and carried out
the following tests:
\begin{enumerate}
\item \label{item:test-small} (Section \ref{sec:test-gp}) Comparison
  of performance of the gradient-projection method
  (Algorithm~\ref{alg:gp}) and the modified Newton method
  (Algorithm~\ref{alg:gpnewton}) on randomly generated polynomials
  with approximate GCD,
\item \label{item:test-large} (Section \ref{sec:test-appgcd})
  Comparison of performance of the GPGCD method with a method based on
  the structured total least norm (STLN) method by
  \citet{kal-yan-zhi2006} and the UVGCD method by \citet{zen2011} on
  large sets of randomly-generated polynomials with approximate GCD,
\item \label{item:test-zeng} (Section \ref{sec:test-zeng}) Comparison
  of performance of the GPGCD method with the STLN-based method and
  the UVGCD method on ill-conditioned polynomials and other test cases
  by \citet{zen2011} and \citet{bin-boi2010}.
\end{enumerate}
Note that, in Test~\ref{item:test-large}, we have tested both the
cases of the real and the complex coefficients, while, in the other
tests, we have tested only the case of the real coefficients.

In Tests \ref{item:test-small} and \ref{item:test-large}, we have
generated random polynomials with GCD then added noise, as follows.
First, we have generated a pair of monic polynomials $F_0(x)$ and
$G_0(x)$ of degrees $m$ and $n$, respectively, with the GCD of degree
$d$.  The GCD and the prime parts of degrees $m-d$ and $n-d$ are
generated as monic polynomials and with random coefficients
$c\in[-10,10]$ of floating-point numbers.  For noise, we have
generated a pair of polynomials $F_{\mathrm{N}}(x)$ and
$G_{\mathrm{N}}(x)$ of degrees $m-1$ and $n-1$, respectively, with
random coefficients as the same as for $F_0(x)$ and $G_0(x)$.  Then,
we have defined a pair of test polynomials $F(x)$ and $G(x)$ as
\[
  F(x)
  = F_0(x)+\frac{e_F}{\|F_{\mathrm{N}}(x)\|_2}F_{\mathrm{N}}(x),
  \quad
  G(x)
  = G_0(x)+\frac{e_G}{\|G_{\mathrm{N}}(x)\|_2}G_{\mathrm{N}}(x),
\]
respectively, scaling the noise such that the $2$-norm of the noise
for $F$ and $G$ is equal to $e_F$ and $e_G$, respectively.  In the
present test, we set $e_F=e_G=0.1$. (See also the notes in
Section~\ref{sec:test-appgcd}.)

The tests have been carried out on Intel Core2 Duo Mobile Processor
T7400 (in Apple MacBook ``Mid-2007'' model) at $2.16$ GHz with RAM
2GB, under Mac OS X 10.6.  All the tests have been carried out on Maple
15 with \verb|Digits=15| executing hardware floating-point arithmetic.

\subsection{Test~\ref{item:test-small}: Comparison of the
  Gradient-Projection Method and the Modified Newton Method}
\label{sec:test-gp}

In this test, we have compared performance of the gradient-projection
method (Algorithm~\ref{alg:gp}) and a modified Newton method
(Algorithm~\ref{alg:gpnewton}), only in the case of the real
coefficients.  For every example, we have generated one random test
polynomial as in the above, and we have applied
Algorithm~\ref{alg:gpgcd} (preserving monicity) with $u=100$ and
$\varepsilon=1.0\times 10^{-8}$.

Table~\ref{tab:gp} shows the result of the test: $m$ and $n$ denotes
the degree of a tested pair $F$ and $G$, respectively, and $d$ denotes
the degree of approximate GCD; ``Perturbation'' is the perturbation of
the perturbed polynomials from the initial inputs, calculated as
\begin{equation}
  \label{eq:perturbation}
  \sqrt{\|\tilde{F}-F\|_2^2+\|\tilde{G}-G\|_2^2},
\end{equation}
where ``$aeb$'' with $a$ and $b$ as numbers denotes $a\times 10^{b}$;
``\#Iterations'' is the number of iterations; ``Time'' is computing
time in seconds.  The columns with ``Alg.~\ref{alg:gp}'' and
``Alg.~\ref{alg:gpnewton}'' are the data for Algorithm~\ref{alg:gp}
(the gradient-projection method) and Algorithm~\ref{alg:gpnewton} (the
modified Newton method), respectively.  Note that, the
``Perturbation'' is a single column since both algorithms give almost
the same values in each examples.
\begin{table*}
  \centering
  \begin{tabular}{|c|c|c|c|c|c|c|c|}
    \hline
    Ex. & $m,n$ & $d$ & Perturbation &
    \multicolumn{2}{c|}{\#Iterations} & \multicolumn{2}{c|}{Time (sec.)}  \\
    \cline{5-8}
    & & &  \eqref{eq:perturbation} & Alg.~\ref{alg:gp} &
    Alg.~\ref{alg:gpnewton} & 
    Alg.~\ref{alg:gp} & Alg.~\ref{alg:gpnewton} \\
    \hline
    1 & $10,10$ & $5$ & $4.25e\!-\!2$ & $3$ & $4$ & $0.10$ & $0.04$ \\
    2 & $20,20$ & $10$ & $6.86e\!-\!2$ & $3$ & $4$ & $0.17$ & $0.11$ \\
    3 & $40,40$ & $20$ & $6.80e\!-\!2$ & $4$ & $5$ & $0.61$ & $0.16$ \\
    4 & $60,60$ & $30$ & $7.24e\!-\!2$ & $3$ & $4$ & $0.69$ & $0.23$ \\
    5 & $80,80$ & $40$ & $5.06e\!-\!2$ & $3$ & $4$ & $1.41$ & $0.41$ \\
    6 & $100,100$ & $50$ & $7.26e\!-\!2$ & $3$ & $4$ & $2.21$ & $0.76$ \\
    \hline
  \end{tabular}
  \caption[Test results comparing the gradient-projection method
  and the modified Newton method.]{Test results comparing the
    gradient-projection method and the modified Newton method; see
    Section~\ref{sec:test-gp} for details.} 
  \label{tab:gp}
\end{table*}

We see that, in all the test cases, the number of iterations of the
gradient-projection method (Algorithm~\ref{alg:gp}) is equal to $3$ or
$4$, which is smaller than that of the modified Newton method
(Algorithm~\ref{alg:gpnewton}) which is equal to $4$ or $5$.  However,
an iteration in Algorithm~\ref{alg:gp} includes solving a linear
system at least twice: once in the \textit{projection step} (Step 2)
and at least once in the \textit{restoration step} (Step 3); whereas
an iteration in Algorithm~\ref{alg:gpnewton} includes that only once.
Thus, total number of solving a linear system in
Algorithm~\ref{alg:gpnewton} is about a half of that in
Algorithm~\ref{alg:gp}.  Furthermore, computing time shows that the
modified Newton method runs approximately twice as fast as the
gradient projection method.  Therefore, we adopt
Algorithm~\ref{alg:gpnewton} as the method of minimization in the
GPGCD method (Algorithm~\ref{alg:gpgcd}).

\subsection{Test~\ref{item:test-large}: Tests on Large Sets of
  Randomly-generated Polynomials}
\label{sec:test-appgcd}

In this test, we have compared Algorithm~\ref{alg:gpgcd} with a method
based on the structured total least norm (STLN) method
(\cite{kal-yan-zhi2006}) and the UVGCD method (\cite{zen2008}), using
their implementation for the \textit{Maple}, in the both cases of the real and
the complex coefficients.  In our implementation of
Algorithm~\ref{alg:gpgcd}, we have chosen the modified Newton method
(Algorithm~\ref{alg:gpnewton}) for minimization.  In the STLN-based
method, we have used their procedure \verb|R_con_mulpoly| and
\verb|C_con_mulpoly|, which calculates the approximate GCD of several
polynomials in $\R[x]$ and $\C[x]$, respectively. In the UVGCD
method, we have used their procedure \verb|uvgcd| for calculating
approximate GCD of polynomials in $\R[x]$ and $\C[x]$.

Note that, in this test, we have defined test polynomials satisfying
another requirement: to make sure that the input polynomials $F(x)$
and $G(x)$ do not have a GCD of degree exceeding $d$, we have adopted
only those satisfying that the smallest singular value of the $d$-th
subresultant matrix $N_d(F,G)$ (see \eqref{eq:subresmat}) is larger
than or equal to $1$.\footnote{Our previous test results
  (\cite[Section 5.2]{ter2009}) have shown that there were test cases
  (input polynomials with the real coefficients) in which the GPGCD
  method was not able to calculate an approximate GCD with
  sufficiently small magnitude of perturbations. After thorough
  investigation, we have found that such input polynomials
  accidentally have an approximate GCD of degree exceeding $d$.  Thus,
  in the present test, we have defined totally new test polynomials
  satisfying the above requirement, then none of such phenomena have
  been observed with the test.}

For every example, we have generated $100$ random test polynomials as
in the above.  In executing Algorithm~\ref{alg:gpgcd}, we have set
$u=200$ and $\varepsilon=1.0\times 10^{-8}$; in \verb|R_con_mulpoly|
and \verb|C_con_mulpoly|, we have set the tolerance $e=1.0\times
10^{-8}$; in \verb|uvgcd|, we have set the initial tolerance
$\delta=1.0\times 10^{-2}$ and have changed it until we have
obtained an approximate GCD of desired degree.

Tables \ref{tab:appgcd} and \ref{tab:appgcd-complex} show the results
of the test in the case of the real and the complex coefficients,
respectively: $m$ and $n$ denotes the degree of a pair $F$ and $G$,
respectively, and $d$ denotes the degree of approximate GCD.  The
columns with ``STLN'' are the data for the STLN-based method;
``UVGCD'' are the data for the UVGCD method; ``GPGCD'' are the data
for the GPGCD method (Algorithm~\ref{alg:gpgcd}).  ``Perturbation'',
``\#Iterations'' and ``Time'' are the same as those in
Table~\ref{tab:gp}, respectively. (Note that computing time for the
UVGCD method does not include the time for ``try and error''
calculations by changing the tolerance $\delta$: it is just for
successful calculations.)

\begin{table*}
  \centering
  \setlength{\tabcolsep}{2.5pt}
  \begin{tabular}{|c|c|c|c|c|c|c|c|c|c|c|}
    \hline
    Ex. & $m,n$ & $d$ & 
    \multicolumn{3}{c|}{Perturbation \eqref{eq:perturbation}}  &
    \multicolumn{3}{c|}{Time (sec.)} & 
    \multicolumn{2}{c|}{\#Iterations} \\
    \cline{4-11}
    & & & STLN & UVGCD & GPGCD & STLN & UVGCD & GPGCD &
    STLN & GPGCD \\
    \hline
    1 & $10,10$ & $5$ &
    $5.64e\!-\!2$ & $1.79e\!-\!1$  & $5.64e\!-\!2$ &
    $0.38$ & $0.64$ & $0.04$ &
    $4.46$ & $4.50$\\ 
    2 & $20,20$ & $10$ &
    $6.22e\!-\!2$ & $1.85e\!-\!1$ & $6.22e\!-\!2$ &
    $1.16$ & $0.86$ & $0.06$ &
    $4.40$ & $4.40$ \\ 
    3 & $30,30$ & $15$ &
    $6.65e\!-\!2$ & $1.87e\!-\!1$ & $6.65e\!-\!2$ &
    $2.43$ & $1.34$ & $0.10$ &
    $4.37$ & $4.46$ \\ 
    4 & $40,40$ & $20$ &
    $6.48e\!-\!2$ & $1.96e\!-\!1$ & $6.48e\!-\!2$ &
    $4.05$ & $2.09$ & $0.13$ &
    $4.11$ & $4.15$ \\  
    5 & $50,50$ & $25$ &
    $6.91e\!-\!2$ & $1.91e\!-\!1$ & $6.91e\!-\!2$ &
    $6.30$ & $3.34$ & $0.19$ &
    $4.03$ & $4.16$ \\ 
    6 & $60,60$ & $30$ &
    $6.75e\!-\!2$ & $1.94e\!-\!1$ & $6.75e\!-\!2$ &
    $9.09$ & $4.37$ & $0.26$ &
    $4.00$ & $4.18$ \\ 
    7 & $70,70$ & $35$ &
    $6.89e\!-\!2$ & $2.08e\!-\!1$ & $6.89e\!-\!2$ &
    $12.47$ & $5.71$ & $0.35$ &
    $3.96$ & $4.13$ \\ 
    8 & $80,80$ & $40$ &
    $6.78e\!-\!2$ & $1.91e\!-\!1$ & $6.78e\!-\!2$ &
    $16.95$ & $7.95$ & $0.44$ &
    $3.16$ & $4.11$ \\ 
    9 & $90,90$ & $45$ &
    $6.92e\!-\!2$ & $1.95e\!-\!1$ & $6.92e\!-\!2$ &
    $22.09$ & $10.20$ & $0.57$ &
    $3.96$ & $4.10$ \\ 
    10 & $100,100$ & $50$ &
    $6.98e\!-\!2$ & $1.95e\!-\!1$ & $6.98e\!-\!2$ &
    $27.48$ & $13.02$ & $0.69$ &
    $3.88$ & $4.09$ \\ 
    \hline
  \end{tabular}
  \caption[Test results for large sets of polynomials with approximate
  GCD, in the case of the real coefficients.]{Test results for large
    sets of polynomials with approximate GCD, in the case of the real
    coefficients; see Section~\ref{sec:test-appgcd} for details.} 
  \label{tab:appgcd}
\end{table*}
\begin{table*}
  \setlength{\tabcolsep}{2.5pt}
  \centering
  \begin{tabular}{|c|c|c|c|c|c|c|c|c|c|c|}
    \hline
    Ex. & $m,n$ & $d$ & 
    \multicolumn{3}{c|}{Perturbation \eqref{eq:perturbation}}  &
    \multicolumn{3}{c|}{Time (sec.)} & 
    \multicolumn{2}{c|}{\#Iterations} \\
    \cline{4-11}
    & & & STLN & UVGCD & GPGCD & STLN & UVGCD & GPGCD &
    STLN & GPGCD \\
    \hline
    1 & $10,10$ & $5$ &
    $5.92e\!-\!2$ & $1.54e\!-\!1$  & $5.92e\!-\!2$ &
    $1.58$ & $0.64$ & $0.11$ &
    $4.50$ & $4.46$\\ 
    2 & $20,20$ & $10$ &
    $6.40e\!-\!2$ & $1.41e\!-\!1$ & $6.40e\!-\!2$ &
    $5.34$ & $1.31$ & $0.20$ &
    $4.30$ & $4.30$ \\ 
    3 & $30,30$ & $15$ &
    $6.63e\!-\!2$ & $1.40e\!-\!1$ & $6.63e\!-\!2$ &
    $11.63$ & $2.12$ & $0.35$ &
    $4.21$ & $4.24$ \\ 
    4 & $40,40$ & $20$ &
    $6.61e\!-\!2$ & $1.34e\!-\!1$ & $6.61e\!-\!2$ &
    $21.57$ & $3.51$ & $0.55$ &
    $4.15$ & $4.13$ \\  
    5 & $50,50$ & $25$ &
    $6.86e\!-\!2$ & $1.48e\!-\!1$ & $6.86e\!-\!2$ &
    $34.23$ & $5.03$ & $0.83$ &
    $4.06$ & $4.10$ \\ 
    6 & $60,60$ & $30$ &
    $6.86e\!-\!2$ & $1.51e\!-\!1$ & $6.86e\!-\!2$ &
    $50.40$ & $7.39$ & $1.16$ &
    $4.02$ & $4.05$ \\ 
    7 & $70,70$ & $35$ &
    $6.94e\!-\!2$ & $1.41e\!-\!1$ & $6.94e\!-\!2$ &
    $69.54$ & $10.31$ & $1.56$ &
    $3.93$ & $4.05$ \\ 
    8 & $80,80$ & $40$ &
    $6.85e\!-\!2$ & $1.44e\!-\!1$ & $6.85e\!-\!2$ &
    $93.77$ & $14.01$ & $2.07$ &
    $3.91$ & $4.07$ \\ 
    9 & $90,90$ & $45$ &
    $6.84e\!-\!2$ & $1.52e\!-\!1$ & $6.84e\!-\!2$ &
    $122.97$ & $18.30$ & $2.65$ &
    $3.90$ & $4.04$ \\ 
    10 & $100,100$ & $50$ &
    $6.94e\!-\!2$ & $1.65e\!-\!1$ & $6.94e\!-\!2$ &
    $157.02$ & $23.72$ & $3.37$ &
    $3.86$ & $4.04$ \\ 
    \hline
  \end{tabular}
  \caption[Test results for large sets of polynomials with approximate
  GCD, in the case of the complex coefficients.]{Test results for
    large sets of polynomials with approximate GCD, in the case of the
    complex coefficients; see Section~\ref{sec:test-appgcd} for
    details.}  
  \label{tab:appgcd-complex}
\end{table*}

We see that the average of magnitude of perturbations by the GPGCD
method is as small as that by the STLN-based method, which is
approximately one-tenth as large as that by the UVGCD method.  For
computing time, the GPGCD method calculates approximate GCD very
efficiently, faster than the STLN-based method by approximately from
$10$ to $30$ times and the UVGCD method by approximately from $6$ to
$10$ times.

\begin{rem}
  \label{rem:kaltofen}
  In this experiment, we have compared our implementation designed for
  problems of two univariate polynomials against the implementation of
  the STLN-based method designed for multivariate multi-polynomial
  problems with additional linear coefficient constraints.  Kaltofen
  \cite{kal2009} has reported that they have tested their
  implementation for just two univariate polynomials with real
  coefficients (\cite{kal-yan-zhi2007}) on an example similar to ours
  with degree 100 and GCD degree 50, and it took (on a ThinkPad of 1.8
  GHz with RAM 1GB) 2 iterations and 9 seconds.  This result will give
  the reader some idea on efficiency of our method.
\end{rem}

\subsection{Test~\ref{item:test-zeng}: Tests for Ill-conditioned
  Polynomials and Other Cases}
\label{sec:test-zeng}

In this test, we have compared Algorithm~\ref{alg:gpgcd} with the
STLN-based method (\cite{kal-yan-zhi2006}) and the UVGCD method
(\cite{zen2008}) on some ill-conditioned polynomials and other test
cases by \citet{zen2011} and \citet{bin-boi2010}, as follows.

Note that we give the degree of approximate GCD in the STLN-based
method and the GPGCD method, while we give the tolerance $\delta$ then
the algorithm estimates the degree of approximate GCD in the UVGCD
method.  Also note that, in some tests in this section, we have
measured the relative error of approximate GCD from the given GCD
\eqref{eq:relativeerror} instead of the magnitude of perturbation
\eqref{eq:perturbation} because, in such cases, we have given test
polynomials with predefined (approximate) GCD and have intended to
observe ``nearness'' of the calculated approximate GCD from the
predefined one.

Throughout the tables in
this section, the columns with ``STLN'', ``UVGCD'', ``GPGCD'',
``Perturbation'', and ``Time'' the same as those in the above,
respectively.

\begin{exmp}
  \label{exmp:cyclotomic}
  An example of ill-conditioned polynomial by \citet[Test 1]{zen2011}.
  Let $n$ be an even positive number and $k=n/2$, and define
  $p_n=u_nv_n$ and $q_n=u_nw_n$, where
  \begin{equation}
    \label{eq:cyclotomic}
    \begin{split}
      u_n &= \prod_{j=1}^k[(x-r_1\alpha_j)^2+r_1^2\beta_j^2],
      \quad
      v_n = \prod_{j=1}^k[(x-r_2\alpha_j)^2+r_2^2\beta_j^2],
      \\
      w_n &= \prod_{j=k+1}^n[(x-r_1\alpha_j)^2+r_1^2\beta_j^2],
      \quad
      \alpha_j=\cos\frac{j\pi}{n},
      \quad
      \beta_j=\sin\frac{j\pi}{n},
    \end{split}
  \end{equation}
  for $r_1=0.5$ and $r_2=1.5$. The zeros of $p_n$ and $q_n$ lie on the
  circles of radius $r_1$ and $r_2$. We had the test for
  $n=6,\ldots,20$ increased by $2$.


  Table~\ref{tab:cyclotomic} shows the result of the test.
 ``Relative error of GCD'' is calculated by
  \begin{equation}
    \label{eq:relativeerror}
    \frac{\|\bar{u}_n(x)-u_n(x)\|_2}{\|u_n(x)\|_2},
  \end{equation}
  where $u_n$ is predefined GCD as shown in \eqref{eq:cyclotomic} and
  $\bar{u}_n$ is approximate GCD. In the table, (*1) indicates that
  the STLN-based method did not converge within 50 times of iterations
  which is a built-in threshold; (*2) indicates that the GPGCD method
  did not converge within 100 times of iterations.  We see that, in
  the GPGCD method as well as in the STLN-based method, the number of
  iterations increases and the accuracy of calculated approximate GCD
  decreases as $n$ increases.  On the other hand, the UVGCD method has
  better accuracy of approximate GCD for large $n$.

  \begin{table}
    \centering
    \begin{tabular}{|c|c|c|c|}
      \hline
      $n$ & \multicolumn{3}{c|}{Relative error of GCD
        \eqref{eq:relativeerror}} \\ 
      \cline{2-4}
      & STLN & UVGCD & GPGCD \\
      \hline
      $6$ & $1.04e\!-\!14$ & $4.60e\!-\!15$ & $3.68e\!-\!15$ \\
      $8$ & $3.98e\!-\!13$ & $7.90e\!-\!13$ & $4.30e\!-\!13$ \\
      $10$ & $1.08e\!-\!10$ & $7.89e\!-\!12$ & $1.08e\!-\!10$ \\
      $12$ & $2.87e\!-\!10$ & $2.95e\!-\!11$ & $2.94e\!-\!10$ \\
      $14$ & $3.10e\!-\!9$ & $3.65e\!-\!10$ & $3.14e\!-\!9$ \\
      $16$ & $6.22e\!-\!9$ (*1) & $3.83e\!-\!10$ & $8.00e\!-\!9$ \\
      $18$ & $1.38e\!-\!6$ (*1) & $9.68e\!-\!9$ & $1.36e\!-\!6$ \\
      $20$ & $6.95e\!-\!6$ (*1) & $1.21e\!-\!8$ & $7.11e\!-\!6$ (*2) \\
      \hline
    \end{tabular}
    \caption{Test results for test polynomials
      \eqref{eq:cyclotomic}. See Example~\ref{exmp:cyclotomic} for details.}
    \label{tab:cyclotomic}
  \end{table}
\end{exmp}

\begin{exmp}
  \label{exmp:multiplegcd}
  Another example of ill-conditioned polynomial by \citet[Test
  2]{zen2011}. Let 
  \begin{equation}
    \label{eq:multiplegcd}
    p(x)=\prod_1^{10}(x-x_j), \quad q(x)=\prod_1^{10}(x-x_j+10^{-j}),
    \quad x_j=(-1)^j(j/2),
  \end{equation}
  The zeros of $q$ have decreasing distances as $0.1$, $0.01$, \ldots,
  from those of $p$. We have tried to calculate an approximate GCD of
  degree $d$ from $1$ to $10$ increased by $1$.
  
  Tables~\ref{tab:multiplegcd-1} and \ref{tab:multiplegcd-2} show the
  result of the test.  In this test, we have measured perturbation
  \eqref{eq:perturbation} since $p$ and $q$ are pairwisely relatively
  prime in a rigorous sense.  Note that we have put the results for
  the UVGCD method in Table~\ref{tab:multiplegcd-2}, separated from
  those for the GPGCD and the STLN-based methods in
  Table~\ref{tab:multiplegcd-1}, because we have given the tolerance
  $\delta$ to obtain approximate GCD in the UVGCD method, while we
  have given the degree $d$ in the GPGCD and the STLN-based
  methods. In Table~\ref{tab:multiplegcd-1}, (*1) indicates that the
  STLN-based method did not converge within 50 times of iterations
  which is a built-in threshold.
  
  We see that, for $d\ge 6$, all the methods find approximate GCD with
  similar magnitude of perturbations.  However, for smaller value of
  $d$, the UVGCD method finds approximate GCD with considerably
  smaller magnitude of perturbations than those in the other methods,
  followed by the STLN-based method.
  \begin{table}[t]
    \centering
    \begin{tabular}{|c|c|c|}
      \hline
      $d$ & \multicolumn{2}{c|}{Perturbation \eqref{eq:perturbation}} \\
      \cline{2-3}
      & STLN & GPGCD \\
      \hline
      $1$ & $5.17e\!-\!1$ (*1) &  $3.21e3$\\
      $2$ & $6.95e\!-\!4$ (*1) &  $3.06e0$\\
      $3$ & $1.97e\!-\!5$ &  $1.26e0$\\
      $4$ & $2.89e\!-\!6$ &  $2.25e\!-\!1$\\
      $5$ & $5.28e\!-\!5$ &  $4.75e\!-\!1$\\
      $6$ & $2.15e\!-\!3$ &  $2.16e\!-\!3$\\
      $7$ & $8.34e\!-\!2$ &  $8.34e\!-\!2$ \\
      $8$ & $2.04e0$ & $2.04e0$\\
      $9$ & $4.70e1$ & $4.70e1$ \\
      $10$ & $7.73e2$ & $7.73e2$ \\
      \hline
    \end{tabular}
    \caption{Test results for test polynomials
      \eqref{eq:multiplegcd} with the STLN-based method and the GPGCD
      method. See Example~\ref{exmp:multiplegcd} for details.} 
    \label{tab:multiplegcd-1}
  \end{table}
  \begin{table}[t]
    \centering
    \begin{tabular}{|c|c|c|}
      \hline
      \multicolumn{3}{|c|}{UVGCD} \\
      \hline
      $\delta$ & $d$ & Perturbation \eqref{eq:perturbation} \\
      \hline
      $1.0e\!-\!11$ & $1$ & $8.02e\!-\!10$ \\
      $1.0e\!-\!10$ & $2$ & $3.27e\!-\!8$ \\
      $1.0e\!-\!9$ & $3$ & $6.03e\!-\!7$ \\
      $1.0e\!-\!8$ & $4$ & $1.99e\!-\!5$ \\
      $1.0e\!-\!7$ & $5$ & $3.45e\!-\!4$ \\
      $1.0e\!-\!6$ & $5$ & $3.45e\!-\!4$ \\
      $1.0e\!-\!5$ & $6$ & $9.61e\!-\!3$ \\
      $1.0e\!-\!4$ & $7$ & $1.79e\!-\!1$ \\
      $1.0e\!-\!3$ & $8$ & $3.18e0$ \\
      $1.0e\!-\!2$ & $8$ & $3.18e0$ \\
      $1.0e\!-\!1$ & $9$ & $5.00e1$ \\
      $1.0e0$ & $10$ & $8.40e2$ \\
      \hline
    \end{tabular}
    \caption{Test results for test polynomials
      \eqref{eq:multiplegcd} with the UVGCD method. See
      Example~\ref{exmp:multiplegcd} for details.} 
    \label{tab:multiplegcd-2}
  \end{table}
\end{exmp}

\begin{exmp}
  \label{exmp:largedegreegcd}
  An example with GCDs of large degree by \citet[Test 3]{zen2011}. Let
  \begin{equation}
    \label{eq:largedegreegcd}
    \begin{split}
      p_n &=u_nv, \quad q_n=u_nw, \\
      v(x) &=\sum_{j=0}^3x^j, \quad w(x)=\sum_{j=0}^3(-x)^j,
    \end{split}
  \end{equation}
  where $u_n(x)$ is a GCD defined as
  a polynomial of degree $n$ whose coefficients are random integers in
  the range $[-5,5]$ and $v(x)$ and $w(x)$ are fixed cofactors.

  Table~\ref{tab:largedegreegcd} shows the result of the test by
  measuring relative error of approximate GCD
  \eqref{eq:relativeerror}.  In this test, we have also measured
  computing time because the difference of it became large among the
  methods for large degree of approximate GCD.  We see that the UVGCD
  method calculates approximate GCD with the best accuracy, followed
  by the STLN-based method and the GPGCD method.  On the other hand,
  the GPGCD method is more efficient than the other methods.
  \begin{table}
    \centering
    \begin{tabular}{|c|c|c|c|c|c|c|}
      \hline
      $n$ & \multicolumn{3}{c|}{Relative error of GCD
        \eqref{eq:relativeerror}} & 
      \multicolumn{3}{c|}{Time (sec.)} \\
      \cline{2-7}
      & STLN & UVGCD & GPGCD & STLN & UVGCD & GPGCD\\
      \hline
      $50$ & $1.60e\!-\!15$ & $1.04e\!-\!16$ & $2.63e\!-\!15$ &
      $1.77$ & $0.22$ & $0.04$ \\
      $100$ & $1.16e\!-\!15$ & $1.59e\!-\!16$ & $4.41e\!-\!15$ &
      $8.17$ & $0.31$ & $0.06$ \\
      $200$ & $1.14e\!-\!15$ & $1.06e\!-\!16$ & $1.23e\!-\!14$ &
      $45.09$ & $0.83$ & $0.12$ \\
      $500$ & $1.35e\!-\!15$ & $1.37e\!-\!16$ & $1.84e\!-\!14$ &
      $552.09$ & $3.39$ & $0.64$ \\
      $1000$ & $1.42e\!-\!15$ & $1.69e\!-\!16$ & $5.30e\!-\!14$ &
      $4318.38$ & $18.66$ & $3.27$ \\
      \hline
    \end{tabular}
    \caption{Test results for test polynomials
      \eqref{eq:largedegreegcd}. See Example~\ref{exmp:largedegreegcd}
      for details.} 
    \label{tab:largedegreegcd}
  \end{table}
\end{exmp}

\begin{exmp}
  \label{exmp:multiplezeros}
  An example with multiple zeros of high multiplicities by
  \citet[Example 4.5]{bin-boi2010}. Let
  \begin{equation}
    \label{eq:multiplezeros}
    u_k(x)=(x^3+3x-1)(x-1)^k,\quad v_k(x)=u'(x),
  \end{equation}
  for positive integer $k$.  Note that the GCD of $u_k(x)$ and
  $v_k(x)$ is $w_k(x)=(x-1)^{k-1}$.

  Table~\ref{tab:multiplezeros} shows the result of the test. In the
  table, as in Example~\ref{exmp:cyclotomic}, (*1) indicates that
  the STLN-based method did not converge within 50 times of iterations
  which is a built-in threshold; (*2) indicates that the GPGCD method
  did not converge within 100 times of iterations.

  We see that, in the GPGCD method as well as in the STLN-based
  method, the number of iterations increases and the accuracy of
  calculated approximate GCD decreases for $k=35$ and $45$.  On the
  other hand, the UVGCD method calculates approximate GCD accurately
  for large $k$.

  \begin{table}
    \centering
    \begin{tabular}{|c|c|c|c|}
      \hline
      $k$ & \multicolumn{3}{c|}{Relative error of GCD 
        \eqref{eq:relativeerror}} \\ 
      \cline{2-4}
      & STLN & UVGCD & GPGCD \\
      \hline
      $15$ & $2.35e\!-\!13$ & $3.08e\!-\!15$ & $1.86e\!-\!12$ \\
      $25$ & $1.64e\!-\!11$ & $1.13e\!-\!14$ & $6.67e\!-\!11$ \\
      $35$ & $3.79e\!-\!10$ (*1) & $8.02e\!-\!15$ & $3.58e\!-\!9$
      (*2) \\
      $45$ & $4.23e\!-\!8$ (*1) & $1.13e\!-\!14$ & $1.78e\!-\!7$ (*2)
      \\ 
      \hline
    \end{tabular}
    \caption{Test results for test polynomials
      \eqref{eq:multiplezeros}. See Example~\ref{exmp:multiplezeros}
      for details.} 
    \label{tab:multiplezeros}
  \end{table}
\end{exmp}

\begin{exmp}
  \label{exmp:multiplezeros-2}
  Another example with multiple zeros of high multiplicities by
  \citet[Test 6]{zen2011}. Let
  \begin{equation}
    \label{eq:multiplezeros-2}
    \begin{split}
    p_{[m_1,m_2,m_3,m_4]}(x) &= (x-1)^{m_1}(x-2)^{m_2}(x-3)^{m_3}(x-4)^{m_4},
    \\
    q_{[m_1,m_2,m_3,m_4]}(x) &= \frac{d}{dx}p_{[m_1,m_2,m_3,m_4]}(x),
    \end{split}
  \end{equation}
  for nonnegative integers $m_1,\ldots,m_4$. Note that the GCD of
  $p_{[m_1,m_2,m_3,m_4]}(x)$ and $q_{[m_1,m_2,m_3,m_4]}(x)$ is
  $(x-1)^{m'_1}(x-2)^{m'_2}(x-3)^{m'_3}(x-4)^{m'_4}$ with
  $m'_j=\max\{m_j-1,0\}$ for $j=1,\ldots,4$.
  
  Table~\ref{tab:multiplezeros-2} shows the result of the test. In the
  table, as in Examples~\ref{exmp:cyclotomic} and
  \ref{exmp:multiplezeros}, (*1) indicates that the STLN-based method
  did not converge within 50 times of iterations which is a built-in
  threshold; (*2) indicates that the GPGCD method did not converge
  within 100 times of iterations.  Furthermore, (*3) indicates that
  the GPGCD method stopped abnormally because the solution of a linear
  system with the coefficient matrix (the Jacobian matrix) as shown in
  \eqref{eq:jacobian-real} became unexpectedly large.

  We see that, in the GPGCD method as well as in the STLN-based
  method, the number of iterations increases and the accuracy of
  calculated approximate GCD becomes almost meaningless for inputs of
  large degree.  On the other hand, the UVGCD method is quite stable
  (in the sense of convergence of the algorithm) and more accurate for
  calculating approximate GCD for those inputs.

  \begin{table}
    \centering
    \begin{tabular}{|c|c|c|c|}
      \hline
      $[m_1,m_2,m_3,m_4]$ & \multicolumn{3}{c|}{Relative error of GCD 
        \eqref{eq:relativeerror}} \\ 
      \cline{2-4}
      & STLN & UVGCD & GPGCD \\
      \hline
      $[2,1,1,0]$ & $1.11e\!-\!13$ & $9.42e\!-\!16$ & $2.83e\!-\!13$ \\
      $[3,2,1,0]$ & $7.33e\!-\!13$ & $3.31e\!-\!15$ & $8.23e\!-\!12$ \\
      $[4,3,2,1]$ & $2.35e\!-\!9$ & $2.95e\!-\!13$ & $2.68e\!-\!9$ \\
      $[5,3,2,1]$ & $1.89e\!-\!8$  & $3.38e\!-\!12$ & $5.56e\!-\!9$ \\ 
      $[9,6,4,2]$ & $4.72e\!-\!8$ (*1) & $5.31e\!-\!11$ & $6.05e\!-\!8$ (*2) \\ 
      $[20,14,10,5]$ & $5.06e\!-\!1$ (*1) & $3.13e\!-\!10$ & $9.98e\!-\!1$ (*2) \\ 
      $[80,60,40,20]$ & $1.0e0$ (*1)  & $1.08e\!-\!3$ & $1.0e0$ (*2) \\ 
      $[100,60,40,20]$ & $1.0e0$ (*1) & $2.16e\!-\!4$ & N/A (*3) \\ 
      \hline
    \end{tabular}
    \caption{Test results for test polynomials
      \eqref{eq:multiplezeros-2}. See Example~\ref{exmp:multiplezeros-2}
      for details.} 
    \label{tab:multiplezeros-2}
  \end{table}
\end{exmp}

\section{Concluding Remarks}
\label{sec:remark}

We have proposed an iterative method, based on the modified Newton
method which is a generalization of the gradient-projection method,
for calculating approximate GCD of univariate polynomials with the
real or the complex coefficients.

Our experiments comparing the GPGCD method with the STLN-based method
and the UVGCD method have discovered advantages and disadvantages of
these methods, as follows.  In the case that input polynomials already
have exact or approximate GCD, then the UVGCD method calculates the
approximate GCD with the best accuracy and relatively fast convergence
among them. On the other hand, in the case that the magnitude of
``noise'' is larger, then the magnitude of perturbations calculated by
the GPGCD method or the STLN-based method is smaller than that
calculated by the UVGCD method.  Furthermore, in such cases, the GPGCD
method has shown significantly better performance over the other
methods in its speed, by approximately up to $30$ times for the
STLN-based method and $10$ times for the UVGCD method, which seems to
be sufficiently practical.  Other examples have shown that the GPGCD
method properly calculates approximate GCD with small or large leading
coefficient.

Our result have shown that, in contrast to the STLN-based methods
which uses \textit{structure preserving} feature for matrix
computations, our simple method can achieve accurate and efficient
computation as or more than theirs in calculating approximate GCDs in
many examples. On the other hand, our result have also shown that our
method is less accurate than the UVGCD method especially in the case
the given polynomials lie sufficiently close to polynomials that have
a GCD in a rigorous sense.  These results suggest that there are some
opportunities for improvements of accuracy and/or efficiency in
calculating approximate GCDs with optimization strategies.

For the future research, the followings are of interest.
\begin{itemize}
\item Convergence analysis of the minimizations: showing global
  convergence of local method is difficult in general (see e.g.\
  \citet{blu-cuc-shu-sma1996}), as the original paper on the modified
  Newton method (\cite{tan1980}) only shows its stability by observing
  whether the Jacobian matrix of the constraint at a local minimal
  point has full-rank or not.  However, it may be possible to analyze
  local convergence property depending on condition on the initial
  point and/or local minimal point. (See also
  Remarks~\ref{rem:stepwidth} and \ref{rem:jacobian}).
\item Improvements on the efficiency: time complexity of our method
  depends on the minimization, or solving a system of linear equations
  in each iteration.  Thus, analyzing the structure of matrices might
  improve the efficiency in solving a linear system.
\item Comparison with other methods (approaches) for approximate GCD:
  from various points of view such as accuracy, stability, efficiency,
  and so on, comparison of our methods with other methods will reveal
  advantages and drawbacks of our method in more detail.
\end{itemize}
Other topics, such as generalization of our method to several input
polynomials, are also among our next problems, some of which are
currently under our investigation (\cite{ter2010b}).

\section*{Acknowledgments}
We thank Erich Kaltofen and Zhonggang Zeng for making their
implementations for approximate GCD available on the Internet, Erich
Kaltofen for providing experimental results in
Remark~\ref{rem:kaltofen}, and Victor Pan for pointing literature on
structured matrix computations. We also thank Takaaki Masui as well as
anonymous reviewers for carefully reading the manuscript and their
valuable suggestions that helped improve the paper.

This research was supported in part by the Ministry of Education,
Culture, Sports, Science and Technology of Japan, under Grant-in-Aid
for Scientific Research (KAKENHI) 19700004.

\section*{Bibliography}


\def\cprime{$'$}

\end{document}